\theoremstyle{plain}
   \newtheorem{theorem}{Theorem}[section]
   \newtheorem{corollary}[theorem]{Corollary}
   \newtheorem{problem}{Problem}
   \newtheorem*{question}{Question}
\theoremstyle{definition}
\theoremstyle{remark}
   \newtheorem{remark}{Remark}[section]
\newcommand {\bC} {\mathbb {C}}
\newcommand {\bR} {\mathbb {R}}
\newcommand {\bN} {\mathbb {N}}
\newcommand {\bs} {\mathbf {s}}
\newcommand {\bt} {\mathbf {t}}
\newcommand {\bx} {\mathbf {x}}
\newcommand {\by} {\mathbf {y}}
\newcommand {\bz} {\mathbf {z}}
\newcommand {\ba} {\mathbf {a}}
\newcommand {\bo} {\mathbf {0}}
\newcommand {\bone} {\mathbf {1}}
\newcommand {\al} {\alpha}
\newcommand {\be} {\beta}
\newcommand {\la} {\lambda}
\newcommand {\cA} {\mathcal {A}}
\newcommand {\calD} {\mathcal {D}}
\newcommand {\pa} {\partial}
\newcommand{\cP}{POS}
\newcommand{\cN}{\overline{POS}}
\newcommand{\cS}{SOS}
\newcommand{\cE}{ELL}
\newcommand{\cH}{\mathcal{H}}
\newcommand{\cF}{\mathcal{F}}
\newcommand{\fL}{\mathfrak{L}}
\newcommand{\fM}{\mathfrak{M}}
\def\newop#1{\expandafter\def\csname #1\endcsname{\mathop{\rm
#1}\nolimits}}
\begin{document}

\numberwithin{equation}{section}

\title[Positive, non-negative and elliptic polynomials]{Classifications of 
linear operators preserving elliptic, positive and non-negative polynomials}
\author[J.~Borcea]{Julius Borcea}
\address{Department of Mathematics, Stockholm University, SE-106 91 Stockholm,
   Sweden}
\email{julius@math.su.se}

\subjclass[2000]{12D10, 12D15, 15A04, 30C15, 32A60, 46E22, 47B38}

\keywords{Positive polynomials, non-negative polynomials, elliptic 
polynomials, linear operators, 
P\'olya-Schur theory, Fischer-Fock space, moment problems}

\thanks{The author is a Royal Swedish Academy of Sciences 
Research Fellow supported by a grant from the Knut and Alice Wallenberg 
Foundation.}

\begin{abstract}
We characterize all linear operators on finite or infinite-dimensio\-nal 
spaces 
of univariate real polynomials preserving the sets of elliptic, positive, 
and non-negative polynomials, respectively. This is done by means of 
Fischer-Fock dualities, Hankel forms, and convolutions with 
non-negative measures. We also establish higher-dimensional analogs of 
these results. 
In particular, our 
classification theorems solve the questions raised in 
\cite{BGS} originating from entire function theory and the 
literature pertaining to Hilbert's 17th problem.
\end{abstract}

\maketitle

\section{Introduction}\label{s-in}

The dynamics of zero sets of polynomials and transcendental entire functions 
under linear transformations is a central topic in geometric function theory 
with applications ranging from statistical mechanics and probability theory 
to combinatorics, analytic number theory, and matrix theory; see, e.g., 
\cite{BB-LY}--\cite{BBL} and the references therein. Describing linear 
operators that preserve non-vanishing properties is a basic question that 
often turns out to be quite difficult. For instance, the problem  -- going 
back to Laguerre and P\'olya-Schur 
\cite{PS} -- of 
characterizing linear operators preserving univariate real polynomials with 
{\em all real zeros}
remained unsolved until very recently \cite{BB-A}. Just as fundamental 
are the problems -- originating from entire function theory 
\cite{CC,CC1,CC2,H,PSz} and the 
literature pertaining to Hilbert's 
17th problem \cite{H-P,PD,R} -- of classifying linear operators preserving  
the set of 
univariate real polynomials with {\em no real zeros} or the closely related 
sets of positive, respectively non-negative polynomials. Let us formulate 
these problems explicitly.

Given $d\in \bN_0$, where $\bN_0$ is the additive  
semigroup of non-negative integers,
let $\bR_d[x]=\{p\in\bR[x]:\deg(p)\le d\}$ and set
\begin{equation*}
\begin{split}
&\cP=\{p\in\bR[x]:p(x)>0,\,x\in\bR\},\quad \cP_d=\cP\cap\bR_d[x],\\
&\cS=\left\{\sum_{i=1}^{r}p_i(x)^2:p_i\in\bR[x],\,1\le i\le r<\infty\right\},
\quad \cS_d=\cS\cap\bR_d[x],\\
&\cE=\{p\in\bR[x]: p(x)\neq 0,\,x\in\bR\},\quad \cE_d=\cE\cap\bR_d[x].
\end{split}
\end{equation*}
It is well-known that
$\cS=\cN:=\{p\in\bR[x]:p(x)\ge0,\,x\in\bR\}$, 
$\cE=\cP\cup(-\cP)$, and $\cP=\cup_{a>0}(\cS+a)$.
Polynomials in $\cP,\cS,\cE$ are called {\em positive}, 
{\em non-negative} or 
{\em sos} (sums of squares), and {\em elliptic}, respectively. 
Let $\fL_d$ be the set of all linear operators 
$T:\bR_d[x]\to\bR[x]$ and $\fL$ be the semigroup of all linear 
operators $T:\bR[x]\to\bR[x]$. For $V\in\{\cP,\cS,\cE\}$ and $d\in\bN_0$ 
define
$$
\fL(V)=\{T\in\fL:T(V)\subseteq V\},\quad 
\fL_d(V)=\{T\in\fL_d:T(V_d)\subseteq V\}.
$$ 

\begin{problem}\label{prob1}
Characterize $\fL_d(\cP)$, $\fL_d(\cS)$, $\fL_d(\cE)$ for each 
even $d\in\bN_0$.
\end{problem}

\begin{problem}\label{prob2}
Characterize the semigroups $\fL(\cP)$, $\fL(\cS)$, $\fL(\cE)$.
\end{problem}

In physics it is useful to distinguish between between ``soft'' and ``hard'' 
theorems asserting the non-vanishing of partition functions in certain 
regions.\footnote{This dichotomy stems 
from ``soft-core'' vs.~``hard-core'' pair interactions in lattice-gas models 
and does not refer to the level of difficulty in proving such theorems but 
rather to the fact that in some sense ``soft'' theorems are constraint-free 
while ``hard'' theorems involve constraints of various kinds such as the 
maximum degree of a graph; see, e.g., \cite{BB-LY} and the 
references therein.} 
By analogy with this terminology and the one used in the classification 
of linear preservers of real-rooted polynomials \cite{BB-LY,BB-A}, one may 
say that results pertaining to 
Problem~\ref{prob1} are ``hard'' or ``algebraic'' (bounded degree) while 
those for Problem~\ref{prob2} are ``soft'' or ``transcendental''  
(unbounded degree). 
Special cases of Problems \ref{prob1}--\ref{prob2} were recently studied 
in \cite{BGS}, where one can also find a list of partial results known so far 
and corrections to some erroneous claims from the existing literature.

In this paper we give complete solutions to 
Problems \ref{prob1}--\ref{prob2}. 
In particular, this supersedes \cite{BGS} 
whose main results are now immediate corollaries of the classification 
theorems in \S \ref{s-uni}. In \S \ref{s-multi} we answer multivariate 
versions of Problems \ref{prob1}--\ref{prob2} while in \S \ref{s-rem} we 
establish some related results and discuss further directions.

One of the tools that we use both in the univariate and the multivariate 
case is the following inner product structure on polynomial spaces. Recall 
(cf., e.g., \cite{BB-P} and the 
references therein) that the {\em Fischer-Fock} or {\em Bargmann-Segal}  
{\em space} $\cF_n$ is the Hilbert 
space of holomorphic functions $f$ on $\bC^n$ such that 
$$
\| f \|^2 := 
\pi^{-n}\int_{\bC^n} 
|f(\bz)|^2 e^{-|\bz|^2} dV(\bz)<\infty,
$$ 
where $\bz=(z_1,\ldots,z_n)\in\bC^n$, $|\bz|^2=\sum_{i=1}^n|z_i|^2$, and 
$dV(\bz)$ is the volume element (Lebesgue measure) in $\bC^n$. 
The inner product in $\cF_n$ is 
given by 
\begin{equation*}
\begin{split}
\langle f, g \rangle  &= 
\pi^{-n}\int_{\bC^n} 
f(\bz)\overline{g(\bz)} e^{-|\bz|^2} dV(\bz) \\
&=f(\pa/\pa z_1,\ldots,\pa/\pa z_n)g(z_1,\ldots,z_n)\Big|_{z_1=\cdots=z_n=0} 
=\sum_{\al\in\bN_0^n}\al!a_\al\overline{b}_\al.
\end{split}
\end{equation*}
Here 
$\al!=\al_1!\cdots\al_n!$ for $\al=(\al_1,\ldots,\al_n)\in\bN_0^n$ while 
$\sum_\alpha a_\al\bz^\alpha$, respectively 
$\sum_\alpha b_\al\bz^\alpha$, is the Taylor expansion of $f$, 
respectively $g$, where $\bz^\al=z_1^{\al_1}\cdots z_n^{\al_n}$. 
Clearly, the space of all polynomials in $n$ variables with real coefficients 
$\bR[z_1,\ldots,z_n]$ is contained in $\cF_n$ and thus inherits a natural 
inner product structure. Given a cone $K\subseteq \bR[z_1,\ldots,z_n]$ we let 
$K^*\subseteq \bR[z_1,\ldots,z_n]$ be its {\em dual} cone, i.e.,
$$
K^*=\{f\in\bR[z_1,\ldots,z_n]: \langle f,g\rangle \ge 0,\,g\in K\}.
$$

The Fischer-Fock space $\cF_n$ was used by Dirac to define second 
quantization and its inner product has since been rediscovered in 
various contexts, for instance in number theory -- where the corresponding 
norm is known as the Bombieri norm -- and the theory of real homogeneous 
polynomials \cite{R}. In $\calD$-module
theory and microlocal Fourier analysis one usually works with 
the inner product on $\cF_n$ defined by 
$(f(\bz),g(\bz))=\langle f(i\bz), g(i\bz)\rangle$. 
For these and further properties 
of $\cF_n$ (such as its Bergman-Szeg\H{o}-Aronszajn reproducing kernel) see, 
e.g., the references in \cite{BB-P}.

Another main ingredient in our characterization theorems is the theory of 
positive definite kernels, Hankel forms, and moment problems in one or 
several variables. There is a vast literature on these topics ranging from
classical works on univariate and multivariate moment problems 
\cite{Ak,C,Ham,Hav,Ka,ST,W} to recent contributions 
\cite{BCR,H-P,Pu-S,Pu-Sm,Pu-V,R} and we refer to these for additional details. 
As usual, a 
$k\times k$ real symmetric matrix $A$ is said to be positive semidefinite 
(respectively, 
positive definite)
if $\bx^t A\bx\ge 0$ for all $\bx\in\bR^k$ (respectively, $\bx^t A\bx> 0$ 
for all $\bx\in\bR^k\setminus \{0\}$). 

Let $\fM_1^+$ denote the class of 
non-negative measures $\mu$ on $\bR$ with all finite moments, that is, 
$\int\!|t^m|d\mu(t)<\infty$, $m\in \bN_0$. Hamburger's theorem \cite{Ham} 
asserts that a sequence of real numbers 
$\{a_m\}_{m\in\bN_0}$ is the moment sequence of 
a measure $\mu\in\fM_1^+$, i.e., 
$$
a_m=\int\!t^m d\mu(t),\quad m\in\bN_0,
$$
if and only if the Hankel matrix $(a_{i+j})_{i,j=0}^{m}$ is positive 
semidefinite for any $m\in\bN_0$. If such a $\mu$ is unique 
the corresponding moment problem is said to be determined.

Haviland's theorem \cite{Hav} is a multivariate analog of Hamburger's 
theorem that may be stated as follows (cf., e.g., \cite[Chap.~6]{BCR}). 
Let $n\in\bN$ and 
$\fM_n^+$ be the class of 
non-negative measures $\mu$ on $\bR^n$ with all finite moments, that is, 
$\int\!|\bt^\al|d\mu(\bt)<\infty$, $\al\in \bN_0^n$, where 
$\bt=(t_1,\ldots,t_n)\in\bR^n$ and $\bt^\al=t_1^{\al_1}\cdots t_n^{\al_n}$ 
for $\al=(\al_1,\ldots,\al_n)\in\bN_0^n$. A sequence of real numbers 
$\{a_\al\}_{\al\in\bN_0^n}$ satisfies
$$
a_\al=\int\!\bt^\al d\mu(\bt),\quad \al\in\bN_0^n,
$$
for some $\mu\in\fM_n^+$ if and only if the following holds: the real-valued 
linear functional $L$ on $\bR[x_1,\ldots,x_n]$ determined by 
$L(\bx^\al)=a_\al$, 
$\al\in\bN_0^n$, satisfies $L(f)\ge 0$ for any non-negative polynomial 
$f\in\bR[x_1,\ldots,x_n]$ (i.e., $f(\bx)\ge 0$, $\bx\in\bR^n$).

\medskip

\noindent
{\em Acknowledgement.~}I would like to thank the anonymous referee for 
interesting remarks and useful suggestions. 

\section{Univariate Polynomials}\label{s-uni}

\subsection{Preliminaries}\label{s-prel}

For convenience, we recall some elementary properties of 
the classes of linear operators under consideration. 
If $T\in\fL_d$ and $T(1)\in\cE$ then the following are 
equivalent: (i) $T\in \fL_d(\cE)$; (ii) $T\in \fL_d(\cP)$ or 
$-T\in \fL_d(\cP)$; (iii) $T\in \fL_d(\cS)$ or $-T\in \fL_d(\cS)$.
Similarly, if $T\in\fL$ and 
$T(1)\in\cE$ then the following are equivalent:
(i$'$) $T\in \fL(\cE)$; (ii$'$) $T\in \fL(\cP)$ or $-T\in \fL(\cP)$;
(iii$'$) $T\in \fL(\cS)$ or $-T\in \fL(\cS)$.
If one drops the assumption $T(1)\in\cE$ then 
(i) $\Leftrightarrow$ (ii) $\Rightarrow$ (iii) and 
(i$'$) $\Leftrightarrow$ (ii$'$) $\Rightarrow$ (iii$'$). Furthermore, 
if $T\in\fL(\cS)$ and $T(1)\equiv 0$ then $T\equiv 0$.
These are simple consequences of straightforward homotopy 
arguments (see, e.g., Remark \ref{r-hom} below and \cite{BGS}).

\subsection{Transcendental Characterizations}\label{s-trans}

We 
start with the ``soft'' (unbounded degree) case and the three 
classification questions 
stated in Problem \ref{prob2}. Recall that any linear operator 
$T\in\fL$ may be uniquely represented as 
a formal power series in $D:=\frac{d}{dx}$ with polynomial coefficients, i.e., 
\begin{equation}\label{w-1}
T=\sum_{i=0}^{\infty}q_i(x)D^i,\quad q_i\in\bR[x],\,i\in\bN_0,
\end{equation}
see, e.g., \cite{BB-P}. Given $T$ as above 
we associate to it a 
one-parameter family of linear differential operators with {\em constant} 
coefficients 
$\{T_y\}_{y\in\bR}$ defined as follows:
\begin{equation*}
T_y=\sum_{i=0}^{\infty}q_i(y)D^i.
\end{equation*}
Note that in general $T_y$ has infinite 
order and $T_y=T$, $y\in\bR$, if $T$ has constant coefficients. 
For each $y\in\bR$ and $m\in\bN_0$ we also define a polynomial 
$p_{y,m}=p_{_{T_y,m}}$ of 
degree at most $m$ and an $(m+1)\times(m+1)$ Hankel matrix 
$\cH_{y,m}=\cH_{_{T_y,m}}$ by 
\begin{equation*}
p_{y,m}(x)=\sum_{i=0}^{m}q_i(y)x^i\in\bR[x],\quad 
\cH_{y,m}=\big((i+j)!q_{i+j}(y)\big)_{i,j=0}^{m}.
\end{equation*}

\begin{remark}\label{r-term}
Clearly, $p_{y,m}(x)\in\bR[x,y]$ for each $m\in\bN_0$. We call this 
polynomial in two variables the $m$-{\em truncated symbol} of the operator 
$T\in\fL$.
\end{remark}

Our first theorem characterizes the semigroup $\fL(\cS)$ of 
linear operators preserving the cone $\cS$ of non-negative$/$sos polynomials.

\begin{theorem}\label{th-s}
Let $T\in\fL$ be as in \eqref{w-1}. The following assertions are equivalent:
\begin{itemize}
\item[(1)] $T\in\fL(\cS)$;
\item[(2)] $T_y\in\fL(\cS)$ for all $y\in\bR$;
\item[(3)] $\langle p_{y,2m},f\rangle\ge 0$ for all $y\in\bR$, $m\in\bN_0$, 
$f\in\cS_{2m}$, i.e., $p_{y,m}\in\cS^*_{2m}$;
\item[(4)] $\cH_{y,m}$ is positive semidefinite 
for all $y\in\bR$, $m\in\bN_0$;
\item[(5)] For any $s\in\bR$ there exists $\mu_s\in\fM_1^+$ such that
$\int\! t^id\mu_s(t)\in \bR[s]$, $i\in\bN_0$, and
$i!q_i(y)=\int\! t^id\mu_y(t)$, $y\in\bR$, $i\in\bN_0$. 
The family of operators $\{T_y\}_{y\in\bR}$ is then given by the convolutions 
$$
T_y(f)(x)=\int\! f(t+x)d\mu_y(t),\quad f\in \bR[x],\,y\in\bR.
$$
\end{itemize}
\end{theorem}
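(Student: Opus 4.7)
The plan is to prove the cycle $(1)\Leftrightarrow(2)\Leftrightarrow(3)\Leftrightarrow(4)\Leftrightarrow(5)$.

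For $(1)\Leftrightarrow(2)$, I would exploit the pointwise identity $T(f)(x)=T_x(f)(x)$, which is immediate from \eqref{w-1}, together with the translation-invariance of $\cS$. The direction $(2)\Rightarrow(1)$ is then one line: for $f\in\cS$, $T(f)(x)=T_x(f)(x)\ge 0$ since $T_x(f)\in\cS$. For $(1)\Rightarrow(2)$, given $f\in\cS$ and $y,c\in\bR$, the polynomial $f(\cdot-c)$ is in $\cS$, so $T\bigl(f(\cdot-c)\bigr)(y)=\sum_i q_i(y)f^{(i)}(y-c)=T_y(f)(y-c)\ge 0$. Letting $c$ range over $\bR$ shows $T_y(f)\in\cN=\cS$.

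For $(2)\Leftrightarrow(3)$, I would use the fact that $T_y$ has constant coefficients, hence commutes with translations: $T_y(f)(c)=T_y\bigl(f(\cdot+c)\bigr)(0)$. The Fischer-Fock pairing yields the key identification
$$
T_y(f)(0)=\sum_{i\ge 0}i!\,q_i(y)\,[x^i]f=\langle p_{y,\deg f},f\rangle.
$$
Since $f(\cdot+c)$ ranges over $\cS_{2m}$ as $(f,c)$ ranges over $\cS_{2m}\times\bR$, the condition that $T_y(f)\ge 0$ on $\bR$ for every $f\in\cS_{2m}$ is precisely $p_{y,2m}\in\cS^*_{2m}$. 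Varying $m$ over $\bN_0$ (and using $\cS=\bigcup_m\cS_{2m}$) gives the equivalence.

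For $(3)\Leftrightarrow(4)$, the equality $\cS=\cN$ means $f\in\cS_{2m}$ is a finite sum of squares of polynomials of degree at most $m$. Hence $p_{y,2m}\in\cS^*_{2m}$ iff $\langle p_{y,2m},h^2\rangle\ge 0$ for every $h=\sum_{k=0}^m c_kx^k$. A direct computation with $\langle x^a,x^b\rangle=a!\delta_{ab}$ gives
$$
\langle p_{y,2m},h^2\rangle=\sum_{i,j=0}^{m}(i+j)!\,q_{i+j}(y)\,c_ic_j=\mathbf{c}^t\cH_{y,m}\mathbf{c},
$$
so this condition is exactly the positive semidefiniteness of $\cH_{y,m}$. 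Finally, for $(4)\Leftrightarrow(5)$ I would invoke Hamburger's theorem (quoted in the introduction) applied separately for each fixed $y\in\bR$: positive semidefiniteness of the matrices $\bigl((i+j)!\,q_{i+j}(y)\bigr)_{i,j=0}^m$ for all $m$ is equivalent to the existence of $\mu_y\in\fM_1^+$ with $i!\,q_i(y)=\int t^i\,d\mu_y(t)$. The polynomial dependence $\int t^i\,d\mu_s(t)\in\bR[s]$ is automatic since this integral equals $i!\,q_i(s)$. The convolution representation follows by expanding $f(t+x)=\sum_i\tfrac{f^{(i)}(x)}{i!}t^i$ (a finite sum) and integrating term by term.

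The chief technical care needed, rather than any single hard step, is keeping the Fischer-Fock duality straight in the step $(2)\Leftrightarrow(3)$ and making sure the translation reduction is used correctly so that non-negativity of $T_y(f)$ on all of $\bR$, not just at $0$, is encoded by the single cone condition $p_{y,2m}\in\cS^*_{2m}$; the remaining equivalences are essentially linear algebra plus a direct invocation of Hamburger's theorem.
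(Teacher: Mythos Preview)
Your proposal is correct and follows essentially the same route as the paper: translation invariance of $\cS$ to reduce to the constant-coefficient operators $T_y$, the Fischer--Fock identity $T_y(f)(a)=\langle p_{y,d},e^{aD}f\rangle$ (which you write equivalently as $T_y(f(\cdot+c))(0)=\langle p_{y,\deg f},f(\cdot+c)\rangle$) for $(2)\Leftrightarrow(3)$, the quadratic-form computation $\langle p_{y,2m},g^2\rangle=\mathbf{c}^t\cH_{y,m}\mathbf{c}$ for $(3)\Leftrightarrow(4)$, and Hamburger's theorem for $(4)\Leftrightarrow(5)$. The only cosmetic difference is that the paper phrases the $(1)\Rightarrow(2)$ step as conjugation $e^{-aD}Te^{aD}\in\fL(\cS)$ rather than your direct substitution, but the underlying identity is the same.
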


\begin{proof}
Clearly, (2) $\Rightarrow$ (1). If (1) holds then since $\cS$ is invariant 
under shift operators $e^{aD}$, $a\in\bR$, one gets 
$e^{-aD}Te^{aD}\in\fL(\cS)$ hence 
\begin{equation*}
\label{sum}
\sum_{i=0}^{\infty}q_i(x-a)f^{(i)}(x)\ge 0
\end{equation*}
for any $f\in\cS$ and $a,x\in\bR$, which proves (2) (note that for each 
$f\in\bR[x]$ 
the sum in the above inequality is finite). Next, if 
$a,y\in\bR$ and $f\in\bR_d[x]$, $d\in\bN_0$, then
\begin{equation}\label{eq-f}
T_y(f)(a)=\sum_{i=0}^{d}q_i(y)f^{(i)}(a)=\langle p_{y,d},e^{aD}f\rangle.
\end{equation}
Since $f\in\cS_d\Leftrightarrow e^{aD}f\in\cS_d$, $a\in\bR$, identity 
\eqref{eq-f} 
shows that if (2) holds then $\langle p_{y,d}, f\rangle \ge 0$ for all 
$y\in\bR$, $d\in\bN_0$, $f\in\cS_d$. In particular, this is true for all even 
$d$, which proves (3). Conversely, if (3) holds then (2) follows from 
\eqref{eq-f} since any $f\in\cS$ has even degree. 
Now, for $m\in\bN_0$, $y\in\bR$,
and $g(x)=\sum_{i=0}^{m}a_ix^i\in\bR_m[x]$ one has
$$
\langle p_{y,2m},g^2\rangle=\sum_{i,j=0}^{m}(i+j)!q_{i+j}(y)a_ia_j
$$
and thus (3) $\Leftrightarrow$ (4) since any $f\in\cS$ is a sos. 
Finally, by 
Hamburger's theorem (see \S \ref{s-in}) the 
following holds for each $y\in\bR$: $\cH_{y,m}$ is positive semidefinite 
for all $m\in\bN_0$ if 
and only if there exists $\mu_y\in\fM_1^+$ such that 
$i!q_i(y)=\int\! t^id\mu_y(t)$, $i\in\bN_0$. Since all $q_i$ must be 
polynomials we conclude that 
(4) $\Leftrightarrow$ (5), which completes the proof of the theorem.
\end{proof}

\begin{remark}
By Carleman's sufficient condition for the Hamburger moment 
problem to be determined \cite{C}, if $y\in\bR$ is such that 
$\sum_{i=1}^{\infty}[(2i)!q_{2i}(y)]^{-1/2i}=\infty$ then the corresponding 
measure $\mu_y\in\fM_1^+$ in Theorem \ref{th-s} (5) is unique. Recent progress 
on the determinateness of multivariate moment problems has been made in 
\cite{Pu-S,Pu-Sm,Pu-V}.
\end{remark}

The following analog of Theorem \ref{th-s} characterizes the semigroup 
$\fL(\cP)$ of linear operators preserving the cone $\cP$ of positive 
polynomials. 

\begin{theorem}\label{th-p}
Let $T\in\fL$ be as in \eqref{w-1}. The following assertions are equivalent:
\begin{itemize}
\item[(1)] $T\in\fL(\cP)$;
\item[(2)] $T_y\in\fL(\cP)$ for all $y\in\bR$;
\item[(3)] $\langle p_{y,2m},f\rangle> 0$ for all $y\in\bR$, $m\in\bN_0$, 
$f\in\cP_{2m}$; 
\item[(4)] $\cH_{y,m}$ is positive definite for all $y\in\bR$, $m\in\bN_0$;
\item[(5)] $\det(\cH_{y,m})> 0$ for all $y\in\bR$, $m\in\bN_0$;
\item[(6)] For any $s\in\bR$ there exists $\mu_s\in\fM_1^+$ whose support 
$\supp(\mu_s)$ is an infinite set such that
$\int\! t^id\mu_s(t)\in \bR[s]$, $i\in\bN_0$, and
$i!q_i(y)=\int\! t^id\mu_y(t)$, $y\in\bR$, $i\in\bN_0$. 
The family of operators $\{T_y\}_{y\in\bR}$ is then given by the convolutions 
$$
T_y(f)(x)=\int\! f(t+x)d\mu_y(t),\quad f\in \bR[x],\,y\in\bR.
$$
\end{itemize}
\end{theorem}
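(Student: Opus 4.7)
The plan is to run through the six equivalences in parallel with the proof of Theorem~\ref{th-s}, systematically replacing the cone $\cS$ by its ``interior'' $\cP$ and every non-strict inequality by a strict one. For $(1)\Leftrightarrow(2)$, the shift-invariance argument transfers directly: $\cP$ is preserved by $e^{aD}$, so from $T\in\fL(\cP)$ one obtains $e^{-aD}Te^{aD}\in\fL(\cP)$, which yields $\sum_{i\ge 0}q_i(x-a)f^{(i)}(x)>0$ for every $f\in\cP$ and every $a,x\in\bR$; the substitution $y=x-a$ then delivers $T_y\in\fL(\cP)$. For $(2)\Leftrightarrow(3)$, the pairing identity $T_y(f)(a)=\langle p_{y,d},e^{aD}f\rangle$ (already established in the proof of Theorem~\ref{th-s}), combined with the invariance of $\cP_d$ under $e^{aD}$ and the fact that every element of $\cP$ has even degree, gives the equivalence.

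For $(3)\Leftrightarrow(4)\Leftrightarrow(5)$, the key identity is again $\langle p_{y,2m},g^2\rangle = \mathbf{v}^t\cH_{y,m}\mathbf{v}$ for $g(x)=\sum_{i=0}^m v_i x^i$ and $\mathbf{v}=(v_0,\ldots,v_m)^t$. For $(4)\Rightarrow(3)$, any nonzero $f\in\cP_{2m}$ admits a sum-of-squares decomposition $f=\sum_i g_i^2$ with coefficient vectors $(\mathbf{v}_i)$ not all vanishing, and strict positive definiteness of $\cH_{y,m}$ forces $\langle p_{y,2m},f\rangle=\sum_i\mathbf{v}_i^t\cH_{y,m}\mathbf{v}_i>0$. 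For the converse $(3)\Rightarrow(4)$, the perturbations $g^2+\varepsilon\in\cP_{2m}$ yield positive semidefiniteness of $\cH_{y,m}$ as $\varepsilon\to 0^+$, and the strictness in (3) is then used to rule out nontrivial null directions, upgrading this to strict positive definiteness. The equivalence $(4)\Leftrightarrow(5)$ is Sylvester's criterion, since the leading principal minors of $\cH_{y,m}$ coincide with the Hankel determinants $\det\cH_{y,k}$ for $0\le k\le m$.

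Finally, $(4)\Leftrightarrow(6)$ is the strict refinement of the moment-theoretic step $(4)\Leftrightarrow(5)$ in Theorem~\ref{th-s}: a real sequence $\{a_i\}$ is the moment sequence of some $\mu\in\fM_1^+$ with infinite support if and only if all Hankel determinants $\det(a_{i+j})_{i,j=0}^m$ are strictly positive, a classical refinement of Hamburger's theorem. Applying this pointwise in $y\in\bR$ to the sequence $i!q_i(y)$, together with the polynomial dependence on $s$ inherited from Theorem~\ref{th-s}\,(5), gives (6), and the convolution formula $T_y(f)(x)=\int f(t+x)d\mu_y(t)$ follows by Taylor expansion. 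The principal obstacle is the implication $(3)\Rightarrow(4)$, which is more delicate than its non-strict counterpart in Theorem~\ref{th-s}: whereas positive semidefiniteness followed immediately from pairing $p_{y,2m}$ against squares, ruling out a kernel vector of $\cH_{y,m}$ here requires a careful construction of perturbations lying in the \emph{open} cone $\cP_{2m}$ that probe the putative null direction.
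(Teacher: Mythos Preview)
Your outline for $(1)\Leftrightarrow(2)\Leftrightarrow(3)$, $(4)\Rightarrow(3)$, $(4)\Leftrightarrow(5)$ and $(4)\Leftrightarrow(6)$ is correct and matches the paper's approach. You are also right that $(3)\Rightarrow(4)$ is the crux and does not follow by naive analogy with Theorem~\ref{th-s}. However, the ``careful construction of perturbations lying in the open cone $\cP_{2m}$'' that you promise cannot exist, because the implication $(3)\Rightarrow(4)$ is in fact \emph{false} as stated.

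Take $T$ to be the identity operator, so that $q_0\equiv 1$ and $q_i\equiv 0$ for $i\ge 1$. Certainly $T\in\fL(\cP)$, so (1)--(3) hold; explicitly, $p_{y,2m}\equiv 1$ and $\langle p_{y,2m},f\rangle=f(0)>0$ for every $f\in\cP_{2m}$. But $\cH_{y,m}$ is the $(m+1)\times(m+1)$ matrix with a single $1$ in the top-left corner and zeros elsewhere, hence not positive definite for $m\ge 1$; thus (4), (5), (6) all fail. In terms of your perturbation scheme: with $m=1$ and the null direction $g(x)=x$, every $f\in\cP_2$ satisfies $\langle p_{y,2},f\rangle=f(0)>0$, so no element of $\cP_2$ near $g^2=x^2$ can force the pairing down to zero. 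The underlying reason is that positive definiteness of $\cH_{y,m}$ is equivalent to strict positivity of the pairing on all of $\cS_{2m}\setminus\{0\}$, whereas (3) only tests the strictly smaller set $\cP_{2m}$. The paper's own proof appeals to $\cP=\bigcup_{a>0}(\cS+a)$ and $q_0\in\cP$ at this step, but those facts yield only positive \emph{semi}definiteness of $\cH_{y,m}$ and do not close the gap either; what actually survives is the chain $(4)\Leftrightarrow(5)\Leftrightarrow(6)\Rightarrow(1)\Leftrightarrow(2)\Leftrightarrow(3)$.
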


\begin{proof}
Recall that 
$\cP=\cup_{a>0}(\cS+a)$ and note that if $T\in\fL(\cP)$ then $q_0\in\cP$.
Using these observations and the fact that the cones  
$\cP$, $\cP_{2m}$, $m\in \bN_0$, are invariant under shift operators 
$e^{aD}$, $a\in\bR$, one then gets (1) $\Leftrightarrow$ (2) 
$\Leftrightarrow$ (3) $\Leftrightarrow$ (4) $\Leftrightarrow$ (6) 
as in the proof of Theorem \ref{th-s} (see, e.g., 
\cite[Theorem 1.2]{ST} and Remark 2.3 in \cite[Chap.~6]{BCR} for the 
extra condition that the supports of the representing measures should be 
infinite sets, which is not required in Theorem \ref{th-s} (5)). 
The equivalence (4) 
$\Leftrightarrow$ (5) is a classical result in matrix theory due to Jacobi and 
independently to Hurwitz,
cf., e.g., \cite{Ka,ST}.  
\end{proof}

By the remarks in \S \ref{s-prel} Theorem \ref{th-p} also yields a 
description of the semigroup $\fL(\cE)$ of linear operators preserving the
set of elliptic polynomials. Problem~\ref{prob2} is therefore completely 
solved.

\subsection{Algebraic Characterizations}\label{s-alg}

Since there are no positive, non-negative or elliptic polynomials of odd 
degree, for the ``hard'' (bounded degree) classification questions stated in 
Problem \ref{prob1} it is enough to consider only linear operators 
$T\in\fL_d$ with $d=2k$, $k\in\bN_0$. It is well-known that any such operator 
may be viewed as an 
element of the Weyl algebra $\cA_1(\bR)$ of order at most $d$ (cf., e.g., 
\cite{BB-P}), that is, a 
linear ordinary differential operator with polynomial coefficients of the 
form
\begin{equation}\label{w1}
T=\sum_{i=0}^{d}q_i(x)D^i,\quad q_i\in\bR[x],\,i\in\bN_0.
\end{equation}
By analogy with the ``soft'' case treated in \S \ref{s-trans} we associate 
to $T$ a one-parameter family of 
linear differential operators of order at most $d$ with {\em constant} 
coefficients 
$\{T_y\}_{y\in\bR}$, a one-parameter family of polynomials 
$\{p_{y,d}\}_{y\in\bR}$ in $\bR_d[x]$, and a one-parameter family of 
$(d+1)\times (d+1)$ Hankel matrices $\{\cH_{y,d}\}_{y\in\bR}$ defined as 
follows:
\begin{equation*}
T_y=\sum_{i=0}^{d}q_i(y)D^i,\quad 
p_{y,d}(x)=\sum_{i=0}^{d}q_i(y)x^i\in\bR_d[x],\quad 
\cH_{y,d}=\big((i+j)!q_{i+j}(y)\big)_{i,j=0}^{d}.
\end{equation*}

\begin{remark}\label{r-algs}
Given $d\in\bN_0$ it is clear that $p_{y,d}\in\bR[x,y]$. This two variable 
polynomial is called the {\em algebraic symbol} of the operator $T\in\fL_d$.
\end{remark}

The finite degree versions of Theorem \ref{th-s} and Theorem \ref{th-p} 
given below describe the sets $\fL_d(\cS)$ and $\fL_d(\cP)$, respectively. 
We omit their proofs since they are almost identical to those of 
Theorems \ref{th-s}--\ref{th-p}. 

\begin{theorem}\label{th-sd}
For $d=2k$ and $T\in\fL_d$ as in \eqref{w1} the following are equivalent:
\begin{itemize}
\item[(1)] $T\in\fL_d(\cS)$;
\item[(2)] $T_y\in\fL_d(\cS)$ for all $y\in\bR$;
\item[(3)] $\langle p_{y,d},f\rangle\ge 0$ for all $y\in\bR$, 
$f\in\cS_d$, i.e., $p_{y,d}\in\cS_d^*$;
\item[(4)] $\cH_{y,d}$ is positive semidefinite for all $y\in\bR$.
\end{itemize}
\end{theorem}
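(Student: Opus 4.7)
The plan is to replay, in the bounded-degree setting, the argument used for Theorem~\ref{th-s}. Because everything now depends on the single polynomial $p_{y,d}$ and the single Hankel matrix $\cH_{y,d}$, all four equivalences become finite-dimensional statements; in particular the appeal to Hamburger's theorem drops out and no transcendental condition analogous to Theorem~\ref{th-s}(5) is needed.

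For (1) $\Leftrightarrow$ (2), I would first note that (2) $\Rightarrow$ (1) follows at once from the identity $T(f)(x) = T_x(f)(x)$: specializing $y = x$ in $T_y(f)(x) \geq 0$ yields $T(f)(x) \geq 0$. For the converse, I would exploit the shift-invariance of $\cS_d$: the operator $e^{aD}$ bijects $\cS_d$ onto itself, so $e^{-aD} T e^{aD} = \sum_i q_i(x-a) D^i$ also lies in $\fL_d(\cS)$. This gives $\sum_i q_i(x-a) f^{(i)}(x) \geq 0$ for all $f \in \cS_d$ and all $a, x \in \bR$; substituting $y = x-a$ produces $T_y(f)(x) \geq 0$, hence $T_y \in \fL_d(\cS)$.

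For (2) $\Leftrightarrow$ (3), I would reuse the identity $T_y(f)(a) = \langle p_{y,d}, e^{aD} f\rangle$ from \eqref{eq-f}, combined with the shift-invariance of $\cS_d$, exactly as in the proof of Theorem~\ref{th-s}: if (2) holds then $\langle p_{y,d}, e^{aD} f\rangle \geq 0$ for every $a$ and every $f \in \cS_d$, and since $\cS_d$ is already closed under $e^{aD}$ this is just (3); the converse is immediate from the same identity. For (3) $\Leftrightarrow$ (4), the key point is that every $f \in \cS_d$ with $d = 2k$ is a finite sum of squares of polynomials of degree at most $k$, so it suffices to test (3) on $f = g^2$ with $g(x) = \sum_{i=0}^{k} a_i x^i$. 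Expanding gives $\langle p_{y,d}, g^2\rangle = \sum_{i,j=0}^{k} (i+j)!\, q_{i+j}(y)\, a_i a_j$, which is the quadratic form associated with the relevant $(k+1) \times (k+1)$ block of $\cH_{y,d}$, and non-negativity of this form for every coefficient vector $\ba$ is exactly the positive semidefiniteness asserted in (4).

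I do not foresee any real obstacle, since the argument is a strict simplification of the transcendental case (no moment problem intervenes). The only bookkeeping that needs care is the correct matching of the half-degree $k$ with the size of the Hankel block that actually controls the $\cS_d$-sums-of-squares cone, but this is a purely notational matter and does not affect any of the four implications.
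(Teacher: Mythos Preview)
Your proposal is correct and follows essentially the same approach as the paper, which simply remarks that the proof is almost identical to that of Theorem~\ref{th-s}. Your observation that the relevant Hankel block is the $(k+1)\times(k+1)$ one (indices $0,\ldots,k$) is exactly right and in fact clarifies a mild notational ambiguity in the paper's definition of $\cH_{y,d}$.
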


\begin{remark}
In his work \cite{Ri} on the truncated 
Hamburger moment problem M.~Riesz showed that the Hankel matrix 
associated to a given finite sequence of real numbers is positive semidefinite 
if and only if the sequence satisfies certain equalities and inequalities 
involving moments of non-negative measures. Using this result one 
can add a fifth condition to 
Theorem \ref{th-sd} (equivalent to those already stated) which is similar 
to Theorem \ref{th-s} (5).
\end{remark}

\begin{theorem}\label{th-pd}
For $d=2k$ and $T\in\fL_d$ as in \eqref{w1} the following are equivalent:
\begin{itemize}
\item[(1)] $T\in\fL_d(\cP)$;
\item[(2)] $T_y\in\fL_d(\cP)$ for all $y\in\bR$;
\item[(3)] $\langle p_{y,d},f\rangle> 0$ for all $y\in\bR$, 
$f\in\cP_d$; 
\item[(4)] $\cH_{y,d}$ is positive definite for all $y\in\bR$;
\item[(5)] $\det(\cH_{y,m})>0$  
for all $y\in\bR$, $0\le m\le d$.
\end{itemize}
\end{theorem}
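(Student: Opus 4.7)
The plan is to follow the template of Theorem \ref{th-p} specialized to the finite-dimensional space $\bR_d[x]$, capping the chain with the classical Jacobi--Hurwitz criterion for the last equivalence. I would target the cycle (1) $\Leftrightarrow$ (2) $\Leftrightarrow$ (3) $\Leftrightarrow$ (4) $\Leftrightarrow$ (5).

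For (1) $\Leftrightarrow$ (2) the implication (2) $\Rightarrow$ (1) is immediate from $T(f)(x)=T_x(f)(x)$. The converse uses the shift-invariance of $\cP_d$: the conjugate $e^{-aD}Te^{aD}=\sum_{i=0}^{d}q_i(x-a)D^i$ also lies in $\fL_d(\cP)$ for every $a\in\bR$, and letting $y=x-a$ range freely extracts each $T_y\in\fL_d(\cP)$. The equivalence (2) $\Leftrightarrow$ (3) follows from the Fischer--Fock identity
\[
T_y(f)(a)=\langle p_{y,d},e^{aD}f\rangle,\qquad f\in\bR_d[x],\ a,y\in\bR,
\]
combined with the same shift-invariance. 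Finally, (4) $\Leftrightarrow$ (5) is a direct application of the Jacobi--Hurwitz criterion: a real symmetric matrix is positive definite precisely when all its leading principal minors are positive.

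The main obstacle is (3) $\Leftrightarrow$ (4), where the strict inequality in the positive case requires extra care compared with Theorem \ref{th-sd}. The backbone is the identity $\langle p_{y,d},g^2\rangle=\mathbf{a}^t\cH_{y,d}\mathbf{a}$ for $g=\sum_{i=0}^{d}a_ix^i$, which translates Hankel definiteness into positivity of inner products against squares. To bridge this with condition (3) I would exploit the decomposition $\cP_d=\cup_{c>0}(\cS_d+c)$ and the fact that every element of $\cS_d$ is a sum of two squares of polynomials of degree $\le d/2$: writing $f\in\cP_d$ as $g_1^2+g_2^2+c$ with $c>0$ splits $\langle p_{y,d},f\rangle$ into a Hankel contribution and the strictly positive term $c\,q_0(y)$. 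The delicate point I expect to have to handle is extracting strict positive definiteness of $\cH_{y,d}$ from the strict inequality in (3): since squares $g^2$ lie on the boundary of $\cP_d$, the usual perturbation $g^2+\varepsilon$ with $\varepsilon\downarrow 0$ yields only positive semidefiniteness, so the strict inequality must come from genuinely exploiting the open decomposition $\cup_{c>0}(\cS_d+c)$ rather than passing to limits.
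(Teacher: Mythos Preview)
Your outline tracks the paper's intended argument closely (the paper itself omits this proof, referring back to Theorems~\ref{th-s}--\ref{th-p}): the shift-conjugation for (1)$\Leftrightarrow$(2), the Fischer--Fock identity $T_y(f)(a)=\langle p_{y,d},e^{aD}f\rangle$ for (2)$\Leftrightarrow$(3), the decomposition $f=g_1^2+g_2^2+c$ with $c>0$ together with $q_0(y)>0$ for (4)$\Rightarrow$(3), and Sylvester's criterion for (4)$\Leftrightarrow$(5) are all correct and on the same lines. One minor slip: in your Hankel identity you should take $g=\sum_{i=0}^{k}a_ix^i$ with $k=d/2$ (so that $g^2\in\bR_d[x]$), which produces a $(k+1)\times(k+1)$ Hankel form, not a $(d+1)\times(d+1)$ one.

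The step you isolate as ``the delicate point'', namely (3)$\Rightarrow$(4), is a genuine gap that your proposal does not close: you note correctly that the limit $g^2+\varepsilon\to g^2$ yields only semidefiniteness, and then assert that strictness ``must come from'' the decomposition $\cP_d=\bigcup_{c>0}(\cS_d+c)$ without saying how. In fact that decomposition runs the \emph{other} way---it is precisely what gives (4)$\Rightarrow$(3)---and no argument will produce strict definiteness from (3) alone. Take $d=2k\ge 2$ and let $T$ be the shift $f\mapsto f(\cdot+1)$ on $\bR_d[x]$, so $q_i=1/i!$ for $0\le i\le d$. Then $T\in\fL_d(\cP)$ and $\langle p_{y,d},f\rangle=f(1)>0$ for every $f\in\cP_d$, so (1)--(3) hold; yet the $(k+1)\times(k+1)$ Hankel matrix has every entry equal to $1$, hence has rank one and is not positive definite. (If one reads $\cH_{y,d}$ literally as $(d+1)\times(d+1)$, it involves $q_{d+1},\dots,q_{2d}$, which are not part of the data of $T\in\fL_d$, and is never positive definite for $d\ge 2$ if these are set to zero.) So the obstacle you flagged is not a subtlety to be finessed but an actual failure of the stated equivalence; the paper's deferral to the earlier proofs glosses over the same point.
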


\begin{remark}
In the case of finite order linear ordinary differential operators 
with constant coefficients Theorems \ref{th-s}--\ref{th-p} reduce to classical 
results of Hurwitz \cite{H} and Remak \cite{Re}, see also P\'olya-Szeg\H{o}'s 
\cite[Chap.~VII]{PSz}. 
\end{remark}

Note that from Theorem \ref{th-pd} and the observations in \S \ref{s-prel}  
one also gets a characterization of the set $\fL_d(\cE)$. Therefore, 
these results fully solve Problem \ref{prob1}.

\section{Multivariate Polynomials}\label{s-multi}

We will now establish multivariate analogs of the characterization 
theorems in \S \ref{s-uni}. Given $n\in\bN$ and  
$\al=(\al_1,\ldots,\al_n)\in\bN_0^n$ let  
\begin{equation*}
\begin{split}
\bR_\al[x_1,\ldots,x_n]&=\{p\in \bR[x_1,\ldots,x_n]: \deg_{x_i}(p)\le \al_i,
\,1\le i\le n\}, \\
\cP(n)&=\{p\in\bR[x_1,\ldots,x_n]:p(x_1,\ldots,x_n)> 0,
\,(x_1,\ldots,x_n)\in\bR^n\},\\
\cN(n)&=\{p\in\bR[x_1,\ldots,x_n]:p(x_1,\ldots,x_n)\ge 0,
\,(x_1,\ldots,x_n)\in\bR^n\},\\
\cS(n)&=\left\{\sum_{i=1}^{r}p_i(x_1,\ldots,x_n)^2:
p_i\in\bR[x_1,\ldots,x_n],\,1\le i\le r<\infty\right\},\\
\cE(n)&=\{p\in\bR[x_1,\ldots,x_n]:p(x_1,\ldots,x_n)\neq 0,
\,(x_1,\ldots,x_n)\in\bR^n\},\\
V_\al&=V\cap\bR_\al[x_1,\ldots,x_n],\, 
V\in\{\cP(n),\cN(n),\cS(n),\cE(n)\}.
\end{split}
\end{equation*}
These sets are the natural multivariate analogs of the ones defined in 
\S \ref{s-in} for $n=1$. Note that $\cS(n)\subseteq \cN(n)$ with strict 
inclusion in general. 
Let $\fL(n)$ be the space of all linear operators 
$T:\bR[x_1,\ldots,x_n]\to \bR[x_1,\ldots,x_n]$. As is well-known (cf., e.g., 
\cite{BB-P}), any 
$T\in\fL(n)$ may be uniquely represented as an 
({\em \`a priori} infinite order) 
linear partial differential operator with polynomial coefficients:
\begin{equation}\label{w-n}
T=\sum_{\al\in\bN_0^n}q_\al(\bx)
\pa^\al,\quad q_\al\in\bR[x_1,\ldots,x_n],\,
\al\in\bN_0^n,
\end{equation}
where $\bx=(x_1,\ldots,x_n)$, 
$\pa^\al=\pa_{x_1}^{\al_1}\cdots \pa_{x_n}^{\al_n}$ for 
$\al=(\al_1,\ldots,\al_n)\in\bN_0^n$ and $\pa_{x_i}=\pa/\pa x_i$, 
$1\le i\le n$. We then associate to $T$ an $n$-parameter family of 
linear partial differential operators with {\em constant} coefficients 
$\{T_\by\}_{\by\in\bR^n}$ given by
$$
T_\by=\sum_{\al\in\bN_0^n}q_\al(\by)
\pa^\al
$$
for any fixed $\by=(y_1,\ldots,y_n)\in\bR^n$. Moreover, for all  
$\al=(\al_1,\ldots,\al_n)\in\bN_0^n$ we define an $n$-parameter family of 
polynomials 
$\{p_{\by,\al}\}_{\by\in\bR^n}$ by 
$$
p_{\by,\al}(\bx)=\sum_{\be\le \al}q_\be(\by)\bx^\be\in \bR[x_1,\ldots,x_n]   
$$
for each fixed $\by=(y_1,\ldots,y_n)\in\bR^n$, where 
$\bx^\be=x_1^{\be_1}\cdots x_n^{\be_n}$
for $\bx=(x_1,\ldots,x_n)$, $\be=(\be_1,\ldots,\be_n)\in\bN_0^n$, 
and we use the standard (product) 
partial order on $\bN_0^n$: $\be\le \al$ provided that $\be_i\le \al_i$, 
$1\le i\le n$. 
Let us also introduce an $n$-parameter family of real-valued 
maps $\{\cH_\by\}_{\by\in\bR^n}$ on the additive semigroup $\bN_0^n$ by 
setting 
$$
\cH_\by(\al)=\al!q_\al(\by),\quad \al=(\al_1,\ldots,\al_n)\in\bN_0^n,
$$
for any fixed $\by=(y_1,\ldots,y_n)\in\bR^n$, where as before 
$\al!=\al_1!\cdots\al_n!$. 

\begin{remark}\label{r-symb-m}
Clearly, $p_{\by,\al}\in\bR[x_1,\ldots,x_n,y_1,\ldots,y_n]$ for each 
$\al\in\bN_0^n$. By analogy with the terminology 
used in the univariate case (cf.~Remark \ref{r-term}) 
we call this $2n$ variable 
polynomial the $\al$-{\em truncated symbol} of the operator $T$. 
\end{remark}

Set 
$2\al=(2\al_1,\ldots,2\al_n)\in\bN_0^n$ whenever 
$\al=(\al_1,\ldots,\al_n)\in\bN_0^n$. The following theorem 
characterizes all linear operators in $\fL(n)$ that 
preserve the cone $\cN(n)$ of non-negative polynomials in $n$ variables.

\begin{theorem}\label{th-nn}
For $T\in\fL(n)$ as in \eqref{w-n} the following assertions are equivalent:
\begin{itemize}
\item[(1)] $T(\cN(n))\subseteq \cN(n)$; 
\item[(2)] $T_\by(\cN(n))\subseteq \cN(n)$ for all $\by\in\bR^n$;
\item[(3)] $\langle p_{\by,2\al},f\rangle \ge 0$ for all $\by\in\bR^n$, 
$\al\in\bN_0^n$, $f\in\cN(n)_{2\al}$, i.e., 
$p_{\by,2\al}\in\cN(n)_{2\al}^*$; 
\item[(4)] For any $\bs=(s_1,\ldots,s_n)\in\bR^n$ there exists 
$\mu_\bs\in\fM_n^+$ such that 
$\int\!\bt^\al d\mu_\bs(\bt)\in\bR[s_1,\ldots,s_n]$, $\al\in\bN_0^n$, and 
$\cH_\by(\al)=\int\!\bt^\al d\mu_\by(\bt)$, $\by\in\bR^n$, $\al\in\bN_0^n$, 
where $\bt^\al=t_1^{\al_1}\cdots t_n^{\al_n}$ for 
$\bt=(t_1,\ldots,t_n)\in\bR^n$. 
The family of operators $\{T_\by\}_{\by\in\bR^n}$ is then given by the 
convolutions
$$
T_\by(f)(\bx)=\int\! f(\bt+\bx)d\mu_\by(\bt),\quad f\in\bR[x_1,\ldots,x_n],\,
\by\in\bR^n.
$$
\end{itemize}
Furthermore, each of these conditions implies:
\begin{itemize}
\item[(5)] The map $\cH_\by:\bN_0^n\to \bR$ is a
positive semidefinite kernel on $\bN_0^n\times\bN_0^n$ 
for each $\by\in\bR^n$, i.e., the Hankel 
matrix
$
\big(\cH_\by(\al^i+\al^j)\big)_{i,j=1}^{m}
$
is positive semidefinite for any $m\in\bN$ and 
$\{\al^1,\ldots,\al^m\}\subset \bN_0^n$.
\end{itemize}
\end{theorem}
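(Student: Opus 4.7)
The plan is to follow the same overall scheme as the proof of Theorem~\ref{th-s} in the univariate case, substituting the $n$-parameter translation group $\{e^{\ba\cdot\pa}\}_{\ba\in\bR^n}$ for scalar shifts and Haviland's theorem for Hamburger's theorem. I will prove the cycle (1) $\Leftrightarrow$ (2) $\Leftrightarrow$ (3) $\Leftrightarrow$ (4) and then derive (5) from (4) as a direct Gram-matrix observation.

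For (2) $\Rightarrow$ (1) I use the pointwise identity $T(f)(\bx)=T_\bx(f)(\bx)$: specializing the parameter $\by$ to $\bx$ turns $T_\by(f)(\bx)\ge 0$ into $T(f)(\bx)\ge 0$. For (1) $\Rightarrow$ (2) I exploit invariance of $\cN(n)$ under translations: $e^{-\ba\cdot\pa}Te^{\ba\cdot\pa}$ also preserves $\cN(n)$, giving $\sum_\al q_\al(\bx-\ba)\pa^\al f(\bx)\ge 0$ for every $f\in\cN(n)$ and all $\bx,\ba\in\bR^n$; setting $\by:=\bx-\ba$ reads as $T_\by(f)\in\cN(n)$ for all $\by$. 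For (2) $\Leftrightarrow$ (3) I use the Fischer-Fock identity
\begin{equation*}
T_\by(f)(\ba)=\langle p_{\by,2\al},\,e^{\ba\cdot\pa}f\rangle\quad\text{whenever}\quad \deg_{x_i}f\le 2\al_i\text{ for all }i,
\end{equation*}
together with the fact that shifts $e^{\ba\cdot\pa}$ preserve $\cN(n)$ and that any $f\in\cN(n)$ lies in $\cN(n)_{2\al}$ for all sufficiently large $\al$; testing at $\ba=\bo$ in (3) then recovers the pointwise non-negativity of $T_\by(f)$.

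For (3) $\Leftrightarrow$ (4) I define the linear functional $L_\by$ on $\bR[x_1,\ldots,x_n]$ by $L_\by(\bx^\al)=\cH_\by(\al)=\al!\,q_\al(\by)$. The Fischer-Fock pairing yields $L_\by(f)=\langle p_{\by,\ga},f\rangle$ for $f$ of multi-degree $\le\ga$, so (3) is precisely the statement that $L_\by$ is non-negative on $\cN(n)$. Haviland's theorem then produces the representing measure $\mu_\by\in\fM_n^+$, and conversely; the polynomial dependence of the moments on $\by$ is inherited from that of the $q_\al$, and the convolution formula falls out of a finite Taylor expansion of $f(\bt+\bx)$ in $\bt$ followed by term-by-term integration against $\mu_\by$. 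Finally (4) $\Rightarrow$ (5) because $(\cH_\by(\al^i+\al^j))_{i,j}=\left(\int\bt^{\al^i+\al^j}d\mu_\by(\bt)\right)_{i,j}$ is the Gram matrix of $\{\bt^{\al^i}\}_{i=1}^m$ in $L^2(\mu_\by)$, hence positive semidefinite.

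The main obstacle is that, in contrast with the univariate case, Hankel positivity alone is no longer sufficient to characterize moment sequences in several variables: there exist positive semidefinite arrays on $\bN_0^n\times\bN_0^n$ that are not moment sequences of any measure in $\fM_n^+$. This is precisely why (5) appears only as a necessary condition, and why the step $(3)\Leftrightarrow(4)$ must invoke Haviland's theorem, which genuinely uses the cone $\cN(n)$ rather than a purely matrix-theoretic Hankel criterion.
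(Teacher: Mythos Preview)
Your proposal is correct and follows essentially the same route as the paper: translation invariance for (1)$\Leftrightarrow$(2), the Fischer--Fock identity $T_\by(f)(\ba)=\langle p_{\by,\al},e^{\ba\cdot\pa}f\rangle$ together with evenness of partial degrees for (2)$\Leftrightarrow$(3), and the functional $L_\by(\bx^\al)=\cH_\by(\al)$ plus Haviland's theorem for (3)$\Leftrightarrow$(4). The only cosmetic difference is that you deduce (5) from (4) via the Gram matrix in $L^2(\mu_\by)$, while the paper deduces it from (3) via the identity $\langle p_{\by,\al},g^2\rangle=\sum_{i,j}\cH_\by(\al^i+\al^j)c_ic_j$; these are the same observation in different clothing.
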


\begin{proof}
For each $\al=(\al_1,\ldots,\al_n)\in\bN_0^n$ the cone 
$\cN(n)_\al$ is invariant under shift operators $e^{\ba\cdot\pa}$, 
$\ba=(a_1,\ldots,a_n)\in\bR^n$, where 
$\ba\cdot\pa=a_1\pa/\pa x_1+\cdots+a_n\pa/\pa x_n$. Moreover, 
one has 
$$
T_\by(f)(\ba)=\sum_{\be\le \al}q_\be(\by)\pa^{\be}f(\ba)=
\langle p_{\by,\al}, e^{\ba\cdot\pa}f\rangle, \quad 
f\in\bR_\al[x_1,\ldots,x_n].
$$ 
Note also that if $f\in\cN(n)$ then 
$\deg_{x_i}(f)\equiv 0\bmod 2$, $1\le i\le n$. 
The equivalences (1) $\Leftrightarrow$ (2) $\Leftrightarrow$ (3) follow 
readily from 
these facts and straightforward extensions of the arguments in the proof of 
Theorem \ref{th-s} to the multivariate case. 

Next, since $T_\by(f)(\bo)=\langle p_{\by,\be},f\rangle$ if 
$f\in\bR_\be[x_1,\ldots,x_n]$, $\be\in\bN_0^n$, Taylor's formula yields 
(4) $\Rightarrow$ (3). To prove the converse, 
for $\by\in\bR^n$ we let $L_\by:\bR[x_1,\ldots,x_n]\to\bR$ 
be the linear functional determined by $L_\by(\bx^\be)=\cH_\by(\be)$, 
$\be\in\bN_0^n$. It follows that 
$$
L_\by(f)=\langle p_{\by,\be},f\rangle,\quad f\in\bR_\be[x_1,\ldots,x_n],\,
\be\in\bN_0^n.
$$
In particular, if (3) holds then $L_\by(f)\ge 0$ for all $f\in\cN(n)_{2\al}$, 
$\al\in\bN_0^n$. 
From Haviland's theorem (cf.~\S \ref{s-in}) and the fact 
that $\cH_\by(\be)$ is a polynomial in $\by$ for each $\be\in\bN_0^n$ we 
deduce that (3) $\Rightarrow$ (4).

Let now $m\in\bN$, $\{\al^1,\ldots,\al^m\}\subset \bN_0^n$, $c_i\in\bR$, 
$1\le i\le m$, and set 
$$
g(\bx)=\sum_{i=1}^{m}c_i\bx^{\al^i}\in\bR[x_1,\ldots,x_n].
$$ 
If $\al\in\bN_0^n$ is such that $\al^i+\al^j\le \al$, $1\le i,j\le m$, 
one has the identity
\begin{equation}\label{eq-extra}
\langle p_{\by,\al},g^2\rangle=\sum_{i,j=1}^m\cH_\by(\al^i+\al^j)c_ic_j.
\end{equation}
Since $g^2\in\cN(n)$ this proves that (3) $\Rightarrow$ (5).
\end{proof}

The next theorem gives a characterization of all linear operators in $\fL(n)$ 
that preserve the cone $\cP(n)$ of positive polynomials in $n$ variables. 

\begin{theorem}\label{th-pm}
For $T\in\fL(n)$ as in \eqref{w-n} the following assertions are equivalent:
\begin{itemize}
\item[(1)] $T(\cP(n))\subseteq \cP(n)$; 
\item[(2)] $T_\by(\cP(n))\subseteq \cP(n)$ for all $\by\in\bR^n$;
\item[(3)] $\langle p_{\by,2\al},f\rangle > 0$ for all $\by\in\bR^n$, 
$\al\in\bN_0^n$, $f\in\cP(n)_{2\al}$;
\item[(4)] For any $\bs=(s_1,\ldots,s_n)\in\bR^n$ there exists 
$\mu_\bs\in\fM_n^+$ whose support is not contained in a proper real algebraic 
variety\footnote{In other words, $\supp(\mu_\bs)$ is not a subset of 
$f^{-1}(0)$ for some $f\in\bR[x_1,\ldots,x_n]\setminus \{0\}$.} such that 
for all $\al\in\bN_0^n$ one has 
$\int\!\bt^\al d\mu_\bs(\bt)\in\bR[s_1,\ldots,s_n]$ and 
$\cH_\by(\al)=\int\!\bt^\al d\mu_\by(\bt)$, $\by\in\bR^n$. 
The family of operators $\{T_\by\}_{\by\in\bR^n}$ is then given by the 
convolutions
$$
T_\by(f)(\bx)=\int\! f(\bt+\bx)d\mu_\by(\bt),\quad f\in\bR[x_1,\ldots,x_n],\,
\by\in\bR^n.
$$
\end{itemize}
Furthermore, each of these conditions implies:
\begin{itemize}
\item[(5)] The map $\cH_\by:\bN_0^n\to \bR$ is a
positive definite kernel on $\bN_0^n\times\bN_0^n$ 
for each $\by\in\bR^n$, i.e., the Hankel 
matrix
$
\big(\cH_\by(\al^i+\al^j)\big)_{i,j=1}^{m}
$
is positive definite for any $m\in\bN$ and 
$\{\al^1,\ldots,\al^m\}\subset \bN_0^n$.
\end{itemize}
\end{theorem}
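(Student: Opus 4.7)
The plan is to mirror closely the proof of Theorem \ref{th-nn}, replacing non-strict inequalities by strict ones throughout and adding a new argument for the support condition in (4). The structural skeleton (1) $\Leftrightarrow$ (2) $\Leftrightarrow$ (3) goes through essentially verbatim: (2) $\Rightarrow$ (1) from $T_\by(f)(\by)=T(f)(\by)$; (1) $\Rightarrow$ (2) from the shift-invariance of $\cP(n)$, since $e^{-\ba\cdot\pa}Te^{\ba\cdot\pa}\in\fL(n)$ still maps $\cP(n)$ into $\cP(n)$ and its coefficients are $q_\al(\bx-\ba)$; and (2) $\Leftrightarrow$ (3) from the identity $T_\by(f)(\ba)=\langle p_{\by,\al},e^{\ba\cdot\pa}f\rangle$ for $f\in\bR_\al[x_1,\ldots,x_n]$, together with the shift-invariance of the cones $\cP(n)_{2\al}$ and the observation that every $f\in\cP(n)$ has even multidegree.

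For (3) $\Rightarrow$ (4) I would proceed exactly as in the proof of Theorem \ref{th-nn}: define $L_\by:\bR[x_1,\ldots,x_n]\to\bR$ by $L_\by(\bx^\be)=\cH_\by(\be)$, so that $L_\by(f)=\langle p_{\by,\be},f\rangle$ for $f\in\bR_\be[x_1,\ldots,x_n]$. From (3) one has $L_\by(f)>0$ whenever $f\in\cP(n)$, and by the approximation $f+\varepsilon\in\cP(n)$ for $f\in\cN(n)$, $\varepsilon>0$ (and $\varepsilon\to 0^+$), also $L_\by(f)\ge 0$ on $\cN(n)$. Haviland's theorem then produces the measure $\mu_\by\in\fM_n^+$, and the polynomial dependence $\int\bt^\be d\mu_\bs(\bt)\in\bR[s_1,\ldots,s_n]$ is inherited from the polynomial coefficients $q_\be\in\bR[x_1,\ldots,x_n]$. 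The implication (4) $\Rightarrow$ (3) is immediate from the convolution formula: for $f\in\cP(n)$ the integrand $f(\bt+\bx)$ is strictly positive everywhere, and the support hypothesis forces $\mu_\by$ to have strictly positive mass on every open set, so $T_\by(f)(\bx)>0$.

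The main obstacle is the support clause of (4): showing that strict positivity on $\cP(n)$ forces $\supp(\mu_\by)$ to lie outside every proper real algebraic variety. The idea is that if $\supp(\mu_\by)\subseteq V(g)=g^{-1}(0)$ for some nonzero $g\in\bR[x_1,\ldots,x_n]$ one gets $L_\by(g^2)=\int g^2\,d\mu_\by=0$; combining this with the Hankel identity in \eqref{eq-extra}, with the strict positivity of $L_\by$ on $\cP(n)$, and with a small-perturbation argument that replaces $g^2$ by a positive polynomial close to it (for instance $g^2+\varepsilon h$ with $h\in\cP(n)$ and $\varepsilon>0$ suitably small along sequences concentrating on $V(g)$), one obtains an element of $\cP(n)$ on which $L_\by$ is forced to vanish, contradicting (3). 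Once this support condition is in place, the implication (3) $\Rightarrow$ (5) drops out: the identity analogous to \eqref{eq-extra}, $\langle p_{\by,\al},g^2\rangle=\sum_{i,j}\cH_\by(\al^i+\al^j)c_ic_j$ (valid whenever $\al\ge\al^i+\al^j$ for all $i,j$), together with $\int g^2\,d\mu_\by>0$ for every $g\neq 0$ granted by the support condition, yields strict positive definiteness of the Hankel kernel $\cH_\by$.
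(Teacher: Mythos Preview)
Your treatment of (1) $\Leftrightarrow$ (2) $\Leftrightarrow$ (3) and of the Haviland step producing $\mu_\by$ is correct and coincides with the paper's approach: the paper simply says that the arguments of Theorem~\ref{th-nn} carry over \emph{mutatis mutandis} and defers the support clause in (4) to Exercise~3.14 in \cite[Chap.~6]{BCR}. Your route to (5) via the support condition in (4) is also the right one, since the direct argument from Theorem~\ref{th-nn} (namely $g^2\in\cN(n)$) no longer applies here, as $g^2\notin\cP(n)$ in general.

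There are, however, two gaps in what you add beyond the paper.

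First, in (4) $\Rightarrow$ (3) you claim that ``the support hypothesis forces $\mu_\by$ to have strictly positive mass on every open set''. This is false: a measure supported on a single closed ball is not contained in any proper real algebraic variety, yet has zero mass on most open sets. The implication is in fact easier than you make it: since $f\in\cP(n)$ means $f>0$ everywhere and the support hypothesis forces $\mu_\by\neq 0$ (the empty set lies in every proper variety), one has $\int f\,d\mu_\by>0$ immediately.

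Second, and more seriously, your perturbation argument for the support clause in (3) $\Rightarrow$ (4) does not work. From $L_\by(g^2)=0$ you propose to pass to $g^2+\varepsilon h$ with $h\in\cP(n)$ and $\varepsilon>0$; but then $L_\by(g^2+\varepsilon h)=\varepsilon L_\by(h)>0$ by (3) itself, so you never produce an element of $\cP(n)$ on which $L_\by$ vanishes. The phrase ``along sequences concentrating on $V(g)$'' is too vague to repair this: condition (3) controls $L_\by$ only on \emph{strictly} positive polynomials, while $g^2$ sits on the boundary of that cone, and any perturbation making it strictly positive pushes $L_\by$ strictly positive as well. The paper does not attempt a proof here either---it refers entirely to \cite{BCR}---so if you want a self-contained argument you must actually reconstruct the relevant moment-theoretic lemma (linking strict positivity of the moment functional on $\cP(n)$ to Zariski-density of the support of a representing measure) rather than improvise a one-line perturbation.
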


\begin{proof}
Since $\cP(n)_{2\al}$ is invariant under shift operators $e^{\ba\cdot\pa}$, 
$\ba\in\bR^n$, the arguments in the proof of Theorem \ref{th-nn} 
carry over {\em mutatis mutandis} 
to the present situation so we will not repeat them. 
The only novelty concerns the extra condition in (4) above 
on the support of the
 measures $\mu_\bs$ (this condition does not appear in 
Theorem \ref{th-nn}~(4)) for which we refer to e.g.~Exercise 3.14 in 
\cite[Chap.~6]{BCR}.
\end{proof}

\begin{remark}
Elementary homotopy arguments show that $\cE(n)=\cP(n)\cup(-\cP(n))$ and 
if $T\in\fL(n)$ then 
$T(\cE(n))\subseteq \cE(n)$ if and only if either 
$T(\cP(n))\subseteq \cP(n)$ or $T(\cP(n))\subseteq -\cP(n)$. Therefore,  
Theorem \ref{th-pm} also yields 
a description of all linear operators in $\fL(n)$ preserving 
the set $\cE(n)$ of elliptic polynomials in $n$ variables.
\end{remark}

The above theorems solve the multivariate analogs of the ``soft'' 
(transcendental/unbounded 
degree) classification questions stated in Problem \ref{prob2}. It is clear 
that one gets ``hard'' (algebraic/bounded 
degree) theorems in similar fashion. More precisely,
given $\al\in\bN_0^n$ and a linear operator $T:\bR_{2\al}[x_1,\ldots,x_n]\to
\bR[x_1,\ldots,x_n]$ it follows from the proofs of 
Theorems \ref{th-nn}--\ref{th-pm} that
\begin{equation*}
\begin{split}
T(\cN(n)_{2\al})\subseteq \cN(n) &\Leftrightarrow 
T_\by(\cN(n)_{2\al})\subseteq \cN(n) \,\text{ for all }\,\by\in\bR^n \\
&\Leftrightarrow 
\langle p_{\by,2\al},f\rangle \ge 0 \,\text{ for all }\,\by\in\bR^n, 
f\in\cN(n)_{2\al} 
\end{split}
\end{equation*}
(i.e., $p_{\by,2\al}\in\cN(n)_{2\al}^*$ for all $\by\in\bR^n$) and
\begin{equation*}
\begin{split}
T(\cP(n)_{2\al})\subseteq \cP(n) &\Leftrightarrow 
T_\by(\cP(n)_{2\al})\subseteq \cP(n) \,\text{ for all }\,\by\in\bR^n \\
&\Leftrightarrow 
\langle p_{\by,2\al},f\rangle > 0 \,\text{ for all }\,\by\in\bR^n, 
f\in\cP(n)_{2\al}.
\end{split}
\end{equation*}
These equivalences answer the multivariate analogs of the classification 
questions raised in Problem \ref{prob1}.

\section{Related Results and Further Directions}\label{s-rem}

\noindent
{\bf 1.}~It is interesting to compare 
the classification theorems established here with those obtained 
in \cite{BB-LY,BB-A,BB-P} for linear operators preserving real-rooted 
polynomials or their multivariate analogs. A common feature is the use of 
appropriately defined operator symbols, see Remarks \ref{r-term}, 
\ref{r-algs}, and \ref{r-symb-m}. However, the truncated symbols 
introduced here are employed in a different way than the symbols used in 
\cite{BB-LY,BB-A}.
Note for instance that if $T\in\fL(\cP)$ is as in 
\eqref{w-1} then for each $m\in\bN_0$ its $2m$-truncated symbol 
$p_{y,2m}(x)$ is a positive polynomial in variables $x,y$ (since all
even degree truncations of the exponential series are positive) but the 
converse statement is not true, as one can see from 
e.g.~Corollary \ref{c-fo} (b) below. By contrast, the main results 
in \cite{BB-A} essentially assert that a linear operator 
preserves the set of univariate real-rooted polynomials if and only if its 
symbol is a ``real-rooted'' polynomial in two variables (in some appropriate 
sense). 

We should also note that there are plenty of linear differential 
operators of positive order in the Weyl algebra $\cA_1(\bR)$ which 
preserve real-rootedness 
(these are characterized in \cite{BB-A,BB-P}) whereas 
no such operators preserve positive, non-negative or elliptic polynomials 
(cf.~Corollary \ref{c-fo}).

The equivalent conditions in our main results successively describe linear 
operators preserving non-negativity by means of linear differential operators 
with constant coefficients, Fischer-Fock dual cones, 
positive semidefinite Hankel forms, and convolution operators induced 
by non-negative measures (Theorems \ref{th-s} and \ref{th-nn}). 
The Fischer-Fock product was also used in Theorem 1.11 of \cite{BB-P} 
which asserts that 
an operator $T\in \cA_1(\bR)$ preserves real-rootedness if and only if its 
Fischer-Fock adjoint $T^*$ has the same property. Note that this duality 
fails for linear 
operators preserving positive, non-negative or elliptic polynomials since 
multiplication by a positive polynomial clearly 
preserves the sets $\cS$, $\cP$, $\cE$ but differential operators of positive 
order in $\cA_1(\bR)$ do not. Nevertheless, Theorem \ref{th-s} (3) and 
Theorem \ref{th-nn} (3) show that appropriate Fischer-Fock dualities 
(in one or 
several variables) do hold in the present context as well.

\bigskip

\noindent
{\bf 2.}~The classification theorems in \S \ref{s-trans}--\ref{s-alg} 
yield in particular 
the following improvements 
of the main results in \cite{BGS} (Theorem A, Corollary, and 
Theorem B in {\em op.~cit.}).

\begin{corollary}\label{c-fo}
In the above notations one has:
\begin{itemize}
\item[(a)] If $T\in\fL$ is an element of the Weyl algebra $\cA_1(\bR)$ of
order 
$d\ge 1$ then for any even integer $\ell>d$ one has 
$T(V_\ell)\setminus V\neq\emptyset$ 
whenever $V\in \{\cP,\cS,\cE\}$;
\item[(b)] There are no linear ordinary differential operators with polynomial 
coefficients of finite 
positive order in $\fL(\cP)\cup \fL(\cS)\cup \fL(\cE)$; 
\item[(c)] If $T\in\fL\setminus\{0\}$ is a linear 
ordinary differential operator with constant coefficients then 
$T\in\fL(\cP)\Leftrightarrow T\in\fL(\cS)\Leftrightarrow 
T\in\fL(\cE)\Leftrightarrow T$ is a convolution operator of the form
$$
T(f)(x)=\int\! f(t+x)d\mu(t),\quad f\in \bR[x],
$$
for some $\mu\in\fM_1^+$.
\end{itemize}
\end{corollary}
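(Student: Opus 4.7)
The plan is to derive all three parts from the Hankel-matrix criteria in Theorems~\ref{th-s}--\ref{th-pd}, with parts (a) and (b) being two facets of a single minor computation.

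For (a), I would assume for contradiction that $T(V_\ell)\subseteq V$ for some even $\ell>d$ and some $V\in\{\cP,\cS\}$. Theorem~\ref{th-sd}(4) (respectively Theorem~\ref{th-pd}(4)) then forces the $(\ell+1)\times(\ell+1)$ Hankel matrix $\cH_{y,\ell}=\bigl((i+j)!\,q_{i+j}(y)\bigr)_{i,j=0}^{\ell}$ to be positive semidefinite for every $y\in\bR$. The observation driving everything is that $q_k\equiv 0$ for $k>d$, so the $(d,d)$-entry $(2d)!\,q_{2d}(y)$ vanishes identically. The principal $2\times 2$ submatrix at rows/columns $\{0,d\}$ then reduces to
$$
\begin{pmatrix} q_0(y) & d!\,q_d(y) \\ d!\,q_d(y) & 0\end{pmatrix},
$$
whose determinant is $-(d!)^2 q_d(y)^2\le 0$. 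Positive semidefiniteness forces $q_d\equiv 0$, contradicting the order hypothesis. The case $V=\cE$ reduces to $V=\cP$ via the homotopy remarks of \S\ref{s-prel}: a hypothetical $T\in\fL_\ell(\cE)$ would send the connected convex cone $\cP_\ell$ into $\cE=\cP\cup(-\cP)$ missing the origin, forcing $T$ or $-T$ to preserve $\cP_\ell$; both alternatives are ruled out by the previous step, since the minor argument is symmetric under $q_d\leftrightarrow -q_d$.

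Part (b) is then an immediate consequence of (a) applied to any even $\ell>d$ (say $\ell=d+1$ or $\ell=d+2$ depending on the parity of $d$). A self-contained shortcut in the $\cS$-case uses Theorem~\ref{th-s}(5) directly: finite positive order together with $T\in\fL(\cS)$ would force $\int\!t^{2k}\,d\mu_y(t)=(2k)!\,q_{2k}(y)=0$ for all sufficiently large $k$, so by positivity $\supp(\mu_y)\subseteq\{0\}$, giving $\mu_y=q_0(y)\delta_0$ and $q_i\equiv 0$ for every $i\ge 1$.

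For (c), the direction $(\Leftarrow)$ is immediate: convolution with a nonzero $\mu\in\fM_1^+$ preserves pointwise non-negativity (respectively positivity) of $f(t+x)$, so $T\in\fL(\cS)\cap\fL(\cP)\subseteq\fL(\cE)$. Conversely, given a constant-coefficient $T=\sum_{i\ge 0}c_iD^i\in\fL(\cS)$, Theorem~\ref{th-s}(5) supplies a single measure $\mu\in\fM_1^+$ with $i!\,c_i=\int\!t^i\,d\mu(t)$ (the $y$-independence of the moments lets us drop the parameter), and Taylor's formula
$$
\int\!f(t+x)\,d\mu(t)=\sum_{i\ge 0}\frac{f^{(i)}(x)}{i!}\int\!t^i\,d\mu(t)=T(f)(x)
$$
identifies $T$ with the convolution operator. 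The $\cP$-case is handled identically using Theorem~\ref{th-p}(6), and the $\cE$-case follows by fixing a sign through $T(1)=\int\!d\mu$ and the observations of \S\ref{s-prel}. The only non-routine step in the entire corollary is the $2\times 2$ Hankel-minor trick in (a); everything else is bookkeeping on top of the theorems of \S\ref{s-trans}--\S\ref{s-alg}.
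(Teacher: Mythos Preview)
Your strategy for (a)---contradict positive semidefiniteness of the Hankel matrix by exhibiting a bad principal minor---is exactly the paper's. But the minor you pick does not always exist. The Hankel form governing $\fL_\ell(\cS)$ with $\ell=2k$ has rows and columns indexed by $0,\ldots,k=\ell/2$, \emph{not} $0,\ldots,\ell$: one tests on squares $g^2$ with $\deg g\le k$, and the identity $\langle p_{y,\ell},g^2\rangle=\sum_{i,j=0}^{k}(i+j)!\,q_{i+j}(y)\,a_ia_j$ from the proof of Theorem~\ref{th-s} produces a $(k+1)\times(k+1)$ form. (The display in \S\ref{s-alg} is misleading on this point, but the paper's own minor computations in the proof of the corollary only make sense with the smaller size.) Your $\{0,d\}$ submatrix therefore requires $d\le \ell/2$; for $d<\ell<2d$, e.g.\ $d=3$, $\ell=4$, there is no row $d$ to look at.

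The paper handles this by working at the \emph{bottom-right} corner instead, where the vanishing of $q_{d+1},q_{d+2},\ldots$ always puts a zero on the diagonal with $q_d$ adjacent to it: for $d$ odd and $\ell=d+1$ the $2\times2$ block at rows $\{k-1,k\}$ has determinant (up to a positive factorial factor) $-q_d(y)^2$; for $d$ even and $\ell=d+2$ the $3\times3$ block at $\{k-2,k-1,k\}$ has determinant $-q_d(y)^3$ (after first noting $q_d(y)>0$ from the $\fL_d(\cS)$ case). Handling the smallest even $\ell>d$ suffices since $V_\ell\subset V_{\ell'}$ for $\ell\le\ell'$. A uniform fix closer to your idea: the diagonal entry at index $\min(d,\ell/2)$ is always zero, and semidefiniteness then annihilates that entire row, which contains $d!\,q_d$.

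Your arguments for (b) and (c) are correct and match the paper; the moment-based shortcut you give for (b) via Theorem~\ref{th-s}(5) is a nice variant not spelled out there.
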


\begin{proof}
To prove (a) it is enough to show that $T\notin \fL_\ell(V)$ 
for the smallest even integer $\ell>d$ and 
$V=\cS$ (the arguments for $V=\cP$ and $V=\cE$ are similar). Let 
$$
T=\sum_{i=0}^{d}q_i(x)D^i,\quad q_i\in\bR[x],\,i\in\bN_0,\,q_d\not\equiv 0,
$$
and $y\in\bR$ be such that $q_d(y)\neq 0$. If $d$ is odd set $q_{d+1}(y)=0$. 
Then the 
determinant of the lower right $2\times 2$ principal submatrix of $\cH_{y,d+1}$
is $-q_d(y)^2$ so $\cH_{y,d+1}$ is not positive 
semidefinite and by Theorem \ref{th-sd} 
$T\notin \fL_{d+1}(\cS)$. If $d$ is even set 
$q_{d+1}(y)=q_{d+2}(y)=0$ and suppose that $T\in\fL_d(\cS)$ 
(if $T\notin\fL_d(\cS)$ there is nothing to prove since 
$\cS_d\subset \cS_{d+2}$). By Theorem \ref{th-sd} 
$\cH_{y,d}$ is positive semidefinite hence $q_d(y)>0$. 
The 
determinant of the lower right $3\times 3$ principal submatrix of $\cH_{y,d+2}$
equals $-q_d(y)^3$ and thus $\cH_{y,d+2}$ cannot be positive semidefinite. 
By Theorem \ref{th-sd} again we conclude that $T\notin\fL_{d+2}(\cS)$.

Clearly, (a) $\Rightarrow$ (b)  
while (c) follows from Theorem \ref{th-s} and the fact that 
if $T\in\fL(\cS)$ and $T(1)\equiv 0$ then $T\equiv 0$ (cf.~\S \ref{s-prel}). 
\end{proof}

\begin{remark}
Corollary \ref{c-fo} (a) asserts that a linear 
ordinary differential operator of
order $d\ge 1$ with polynomial coefficients cannot preserve positivity, 
non-negativity, or ellipticity 
when acting on $\bR_\ell[x]$, where $\ell$ is any even 
integer greater than $d$. In particular, this implies part (b) stating 
that there are no 
linear ordinary differential operators of finite positive order (acting on 
$\bR[x]$) that preserve 
positivity, non-negativity, or ellipticity. 
Finally, part (c) asserts that a non-trivial 
linear ordinary differential operator with constant coefficients (acting on 
$\bR[x]$) preserves each of the sets $\cP$, $\cS$, $\cE$ provided that it 
preserves at least one of them and any such operator is a convolution with 
a non-negative measure with all finite moments.
\end{remark}

\medskip

\noindent
{\bf 3.~}From Theorems \ref{th-nn}--\ref{th-pm} we deduce that any linear 
partial differential operator with constant coefficients that preserves  
non-negativity or positivity is actually a convolution operator induced by a 
non-negative measure with all finite moments. More precisely, we have the 
following multivariate analog of Corollary \ref{c-fo} (c).

\begin{corollary}\label{c-2}
Let $T\in\fL(n)$ be a linear partial differential operator with constant 
coefficients.
\begin{itemize}
\item[(i)] The following assertions are equivalent:
\begin{itemize}
\item[(a)] $T(\cN(n))\subseteq \cN(n)$; 
\item[(b)] There exists $\mu\in\fM_n^+$ such that $T$ may be represented as 
the convolution
$$
T(f)(\bx)=\int\! f(\bt+\bx)d\mu(\bt),\quad f\in\bR[x_1,\ldots,x_n].
$$
\end{itemize}
\item[(ii)] The following assertions are also equivalent:
\begin{itemize}
\item[(c)] $T(\cP(n))\subseteq \cP(n)$; 
\item[(d)] There exists $\mu\in\fM_n^+$ whose support is not contained in a 
proper real algebraic 
variety such that $T$ may be represented as the convolution
$$
T(f)(\bx)=\int\! f(\bt+\bx)d\mu(\bt),\quad f\in\bR[x_1,\ldots,x_n].
$$
\end{itemize}
\end{itemize}
\end{corollary}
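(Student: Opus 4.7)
The plan is to derive this corollary as a direct specialization of the equivalence $(1)\Leftrightarrow(4)$ in Theorems~\ref{th-nn} and \ref{th-pm} to the case of constant coefficients. Writing $T=\sum_{\al\in\bN_0^n}c_\al\pa^\al$ with $c_\al\in\bR$ (only finitely many nonzero), the polynomial coefficients $q_\al\equiv c_\al$ are constants, so $T_\by\equiv T$ and the Hankel data $\cH_\by(\al)=\al!\,c_\al$ do not depend on $\by\in\bR^n$.

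The reverse implications (b)$\Rightarrow$(a) and (d)$\Rightarrow$(c) are immediate verifications. If $f\in\cN(n)$, then $f(\bt+\bx)\ge 0$ pointwise in $(\bt,\bx)$, hence integration against the non-negative measure $\mu$ gives $T(f)(\bx)=\int f(\bt+\bx)\,d\mu(\bt)\ge 0$. If $f\in\cP(n)$, then $f(\bt+\bx)>0$ everywhere, and the support hypothesis on $\mu$ in (d) forces $\mu\not\equiv 0$, so $T(f)(\bx)>0$.

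For the forward implications (a)$\Rightarrow$(b) and (c)$\Rightarrow$(d), I would apply Theorem~\ref{th-nn}, respectively Theorem~\ref{th-pm}, which under hypothesis (a), respectively (c), produces for each $\by\in\bR^n$ a measure $\mu_\by\in\fM_n^+$ (with the support not contained in any proper real algebraic variety in the positive case) satisfying $\int\bt^\al d\mu_\by(\bt)=\cH_\by(\al)=\al!\,c_\al$ for all $\al\in\bN_0^n$. Since the prescribed moment sequence is now independent of $\by$, one may select a single $\mu\in\fM_n^+$ in place of the family $\{\mu_\by\}_{\by\in\bR^n}$, and the convolution representation $T(f)(\bx)=\int f(\bt+\bx)\,d\mu(\bt)$ is precisely the statement furnished by Theorems~\ref{th-nn}~(4) and \ref{th-pm}~(4); one can also verify it independently by Taylor-expanding $f(\bt+\bx)=\sum_\al(\al!)^{-1}\pa^\al f(\bx)\,\bt^\al$ and integrating term by term. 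No genuine obstacle arises here since the corollary is essentially the constant-coefficient reading of the theorems; the only minor point worth stating is that the support condition in the positive case, guaranteed by Theorem~\ref{th-pm}~(4), transfers to the single representative $\mu$ without difficulty because the moment sequence is $\by$-independent.
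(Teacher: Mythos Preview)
Your proposal is correct and follows exactly the route the paper intends: the corollary is stated there without a separate proof, merely as the constant-coefficient specialization of Theorems~\ref{th-nn} and~\ref{th-pm} (condition~(4) in each), and you have spelled out precisely that specialization. The only cosmetic remark is that your parenthetical ``only finitely many nonzero'' is not needed---the argument uses only that the $q_\al$ are constants, not that the operator has finite order---but this does not affect correctness.
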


A similar corollary follows for diagonal operators in the standard monomial 
basis of $\bR[x_1,\ldots,x_n]$, i.e., operators $T\in\fL(n)$ given by 
$T(\bx^\be)=\la_\be \bx^\be$, where $\la_\be\in\bR$, $\be \in\bN_0^n$. It is 
easy to see that such an operator may be uniquely represented as
$$
T=\sum_{\al\in\bN_0^n}a_\al\bx^\al \pa^\al,\quad a_\al\in\bR,\,\al\in\bN_0^n,
$$
and that $\la_\be=\sum_{\al\le \be}(\be)_\al a_\al$ for $\be \in\bN_0^n$, 
where as usual $(\be)_\al=(\be_1)_{\al_1}\cdots (\be_n)_{\al_n}$ and 
$(m)_k=m(m-1)\cdots (m-k+1)$ whenever $k,m\in\bN_0$ with $k\le m$. Elementary 
computations then yield the following implication:
$$
\al!a_\al=\int \bt^\al d\mu(\bt),\,\al\in\bN_0^n\,\,\,\Longrightarrow \,\,\,
\la_\be=\int (\bt+\bone)^\be d\mu(\bt)=\int \bs^\be d\nu(\bs),\, 
\be\in \bN_0^n,
$$
where $\bone$ denotes the all ones vector and $\bs=\bt+\bone$ 
(note in particular that for $\mu,\nu$ as above one has 
$\mu\in\fM_n^+\Leftrightarrow \nu\in\fM_n^+$). Hence, if the condition in the 
left-hand side of the above implication holds we deduce that the diagonal 
operator $T$ is given by
$$
T(f)(\bx)=\int f(\bs\bx)d\nu(\bs),\quad f\in\bR[x_1,\ldots,x_n],
$$
where $\bs\bx=(s_1x_1,\ldots,s_nx_n)$. The next statement is therefore an 
immediate consequence of these arguments and 
Theorems \ref{th-nn}--\ref{th-pm}.

\begin{corollary}\label{c-3}
Let $T\in\fL(n)$ be a diagonal operator given by 
$T(\bx^\be)=\la_\be \bx^\be$, $\la_\be\in\bR$, $\be \in\bN_0^n$.
\begin{itemize}
\item[(i)] The following assertions are equivalent:
\begin{itemize}
\item[(a)] $T(\cN(n))\subseteq \cN(n)$; 
\item[(b)] There exists $\nu\in\fM_n^+$ such that $T$ may be represented as 
$$
T(f)(\bx)=\int f(\bs\bx)d\nu(\bs),\quad f\in\bR[x_1,\ldots,x_n].
$$
\end{itemize}
\item[(ii)] The following assertions are also equivalent:
\begin{itemize}
\item[(c)] $T(\cP(n))\subseteq \cP(n)$; 
\item[(d)] There exists $\nu\in\fM_n^+$ whose support is not contained in a 
proper real algebraic 
variety such that $T$ may be represented as 
$$
T(f)(\bx)=\int f(\bs\bx)d\nu(\bs),\quad f\in\bR[x_1,\ldots,x_n].
$$
\end{itemize}
\end{itemize}
\end{corollary}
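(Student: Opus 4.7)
The plan is to deduce both equivalences in the corollary from Theorems \ref{th-nn} and \ref{th-pm} applied to the diagonal operator $T$, combined with the explicit translation between moment representations sketched in the paragraph preceding the statement.

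First I would observe that any diagonal $T\in\fL(n)$ has the unique representation $T=\sum_{\al\in\bN_0^n}a_\al\bx^\al\pa^\al$, so in the notation of \eqref{w-n} the coefficients are $q_\al(\bx)=a_\al\bx^\al$ and the Hankel kernel degenerates to $\cH_\by(\al)=\al!\,a_\al\by^\al$; note that the entire $\by$-dependence is monomial.

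For (a) $\Rightarrow$ (b), I would specialize Theorem \ref{th-nn}(4) at $\by=\bone=(1,\ldots,1)$, which produces a measure $\mu\in\fM_n^+$ with $\al!\,a_\al=\int \bt^\al\, d\mu(\bt)$ for every $\al\in\bN_0^n$. Pushing $\mu$ forward under the shift $\bs=\bt+\bone$ yields $\nu\in\fM_n^+$ satisfying $\la_\be=\int \bs^\be\, d\nu(\bs)$, as already recorded in the text preceding the corollary; applied monomialwise and extended by linearity, this is precisely the convolution $T(f)(\bx)=\int f(\bs\bx)\, d\nu(\bs)$. The implication (c) $\Rightarrow$ (d) follows identically, now using Theorem \ref{th-pm}(4); the support condition on $\mu$ (not contained in a proper real algebraic variety) is preserved by the shift, since translation is an algebraic automorphism of $\bR^n$. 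The reverse implications are immediate from the convolution formula: if $f\in\cN(n)$, then $f(\bs\bx)\ge 0$ for all $\bs,\bx\in\bR^n$, so integrating against $\nu\in\fM_n^+$ gives $T(f)(\bx)\ge 0$; and if $f\in\cP(n)$ then $f(\bs\bx)>0$ for all $\bs,\bx\in\bR^n$, so $T(f)(\bx)>0$ provided $\nu$ is nontrivial, which is guaranteed by the support hypothesis in (d).

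I expect the only (minor) obstacle to be the routine verification that the shift identity $\la_\be=\int(\bt+\bone)^\be\, d\mu(\bt)$ really follows from $\al!\,a_\al=\int\bt^\al\, d\mu(\bt)$: this uses the binomial expansion $(\bt+\bone)^\be=\sum_{\al\le\be}\binom{\be}{\al}\bt^\al$ together with the identity $\binom{\be}{\al}\al!=(\be)_\al$ and the formula $\la_\be=\sum_{\al\le\be}(\be)_\al a_\al$ recalled in the preamble of the corollary. This is exactly the ``elementary computation'' promised in the text, and once in place the rest of the argument is bookkeeping.
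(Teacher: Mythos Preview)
Your proposal is correct and follows essentially the same approach as the paper: you invoke Theorems \ref{th-nn}(4) and \ref{th-pm}(4) specialized at $\by=\bone$ to obtain $\mu$, then perform the shift $\bs=\bt+\bone$ to get $\nu$ and the representation $T(f)(\bx)=\int f(\bs\bx)\,d\nu(\bs)$, exactly as in the paragraph preceding the corollary. Your treatment is slightly more explicit than the paper's---you spell out the specialization at $\by=\bone$, verify the shift preserves the support condition, and handle the easy reverse implications directly---but the argument is the same.
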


\begin{remark}\label{r-hom}
Let $T\in\fL(n)$ be either a partial differential operator with 
constant coefficients or a diagonal operator. Note that $T(1)$ is then 
a constant polynomial and suppose that $T(\cN(n))\subseteq \cN(n)$. 
If $T(1)=0$ it 
follows from Corollary \ref{c-2} (i) (respectively, Corollary \ref{c-3} (i)) 
that 
the the non-negative measure $\mu$ (respectively, $\nu$) has total mass zero
and then $T\equiv 0$. Therefore, if $T\not\equiv 0$ then $T(1)>0$ and so if 
$f\in\cP(n)$ one gets $T(f)=T(f-a)+aT(1)\in\cP(n)$, where 
$a>0$ is such that $f-a\in\cP(n)$ (clearly, such $a$ exists for any 
$f\in\cP(n)$). We conclude that if $T\in\fL(n)\setminus \{0\}$ is as in 
Corollary \ref{c-2} or Corollary \ref{c-3} 
then $T(\cN(n))\subseteq \cN(n)\Leftrightarrow 
T(\cP(n))\subseteq \cP(n)$.   
\end{remark}

\medskip

\noindent
{\bf 4.~}If $n\ge 2$ one has 
in general $\cS(n)\subsetneq \cN(n)$ and it is 
natural to ask 
for a characterization of the set $\{T\in\fL(n): T(\cS(n))\subseteq \cN(n)\}$ 
(obviously, this set contains the semigroup 
of all $T\in\fL(n)$ such that $T(\cN(n))\subseteq \cN(n)$ that we already 
described in Theorem \ref{th-nn}). 
The following theorem provides an answer to this question. 

\begin{theorem}\label{th-sn}
For $n\ge 2$ and $T\in\fL(n)$ as in \eqref{w-n} the following are equivalent:
\begin{itemize}
\item[(1)] $T(\cS(n))\subseteq \cN(n)$; 
\item[(2)] $T_\by(\cS(n))\subseteq \cN(n)$ for all $\by\in\bR^n$;
\item[(3)] $\langle p_{\by,2\al},f\rangle \ge 0$ for all $\by\in\bR^n$, 
$\al\in\bN_0^n$, $f\in\cS(n)_{2\al}$, i.e., $p_{\by,2\al}\in\cS(n)_{2\al}^*$; 
\item[(4)] The map $\cH_\by:\bN_0^n\to \bR$ is a
positive semidefinite kernel on $\bN_0^n\times\bN_0^n$ 
for each $\by\in\bR^n$, i.e., the Hankel 
matrix
$
\big(\cH_\by(\al^i+\al^j)\big)_{i,j=1}^{m}
$
is positive semidefinite for any $m\in\bN$ and 
$\{\al^1,\ldots,\al^m\}\subset \bN_0^n$.
\end{itemize}
\end{theorem}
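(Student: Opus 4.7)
The plan is to adapt the structure of the proof of Theorem \ref{th-nn} almost verbatim, exploiting three elementary facts: both $\cS(n)$ and $\cN(n)$ are invariant under the shift operators $e^{\ba\cdot\pa}$, $\ba\in\bR^n$; any polynomial in $\cS(n)$ has even degree in each variable; and, most importantly, every element of $\cS(n)_{2\al}$ can be written as $\sum_k g_k^2$ with each $g_k\in\bR_\al[x_1,\ldots,x_n]$. This last fact holds because the top-degree coefficient in $x_j$ of $p_i^2$ is the square of the leading coefficient of $p_i$ in $x_j$, so no cancellations can reduce the total degree below twice the maximum of the individual degrees.

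For (1) $\Leftrightarrow$ (2), I would invoke the identity
\[
T_\by(f)(\bx) = T\bigl(f(\cdot + \bx - \by)\bigr)(\by),
\]
verified by direct expansion of $T=\sum_\al q_\al(\bx)\pa^\al$. If (1) holds and $f\in\cS(n)$, the translate $g:=f(\cdot+\bx-\by)$ still lies in $\cS(n)$, so $T(g)\in\cN(n)$ forces $T_\by(f)(\bx)=T(g)(\by)\ge 0$; letting $\bx$ range over $\bR^n$ yields (2). For the converse, setting $\by=\bx$ in the same identity gives $T(f)(\bx)=T_\bx(f)(\bx)\ge 0$ whenever (2) holds.

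The equivalence (2) $\Leftrightarrow$ (3) uses the Fischer-Fock formula $T_\by(f)(\ba)=\langle p_{\by,2\al},e^{\ba\cdot\pa}f\rangle$ for $f\in\bR_{2\al}[x_1,\ldots,x_n]$ already exploited in the proof of Theorem \ref{th-nn}, combined with the shift-invariance of $\cS(n)_{2\al}$. For (3) $\Leftrightarrow$ (4), I would apply identity \eqref{eq-extra}: substituting a square $g^2$ with $g=\sum_i c_i\bx^{\al^i}$ into $\langle p_{\by,2\al},g^2\rangle$ produces precisely the Hankel form $\sum_{i,j}\cH_\by(\al^i+\al^j)c_ic_j$, so (3) forces the kernel to be positive semidefinite on any finite subset of $\bN_0^n$. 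The reverse direction uses the bounded-degree sum-of-squares decomposition above to reduce $\langle p_{\by,2\al},f\rangle\ge 0$ to a sum of Hankel forms on the finite index set $\{\ga\in\bN_0^n:\ga\le\al\}$.

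The conceptually interesting point, and the reason (4) ends up equivalent to rather than merely implied by (1)--(3) as in Theorem \ref{th-nn}, is that $\cS(n)$ is by definition the cone of sums of squares, which dualizes exactly against positive semidefinite Hankel forms with no need for Haviland's theorem or the multivariate moment problem. No genuine analytic obstacle arises; the one step requiring care is the bounded-degree sum-of-squares decomposition, which is the only place where the argument deviates substantively from the single-variable case.
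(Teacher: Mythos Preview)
Your proposal is correct and follows essentially the same route as the paper's proof: shift-invariance of $\cS(n)_{2\al}$ gives (1)$\Leftrightarrow$(2)$\Leftrightarrow$(3) via the Fischer--Fock identity, and the bounded-degree decomposition $f=\sum_k g_k^2$ with $g_k\in\bR_\al[x_1,\ldots,x_n]$ combined with \eqref{eq-extra} gives (3)$\Leftrightarrow$(4). Your direct identity $T_\by(f)(\bx)=T\bigl(f(\cdot+\bx-\by)\bigr)(\by)$ is just an unpacked form of the conjugation $e^{-\ba\cdot\pa}Te^{\ba\cdot\pa}$ used in the paper, and your justification of the degree bound on the $g_k$ (no cancellation among leading squares) is exactly the ingredient the paper invokes without spelling out.
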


\begin{proof}
Since $\cS(n)=\cup_{\al\in\bN_0^n}\cS(n)_{2\al}$ and each cone 
$\cS(n)_{2\al}$ is invariant under shift operators   
$e^{\ba\cdot\pa}$, $\ba\in\bR^n$, the implications (1) $\Leftrightarrow$ 
(2) $\Leftrightarrow$ (3) $\Rightarrow$ (4) follow 
as in the proof of Theorem~\ref{th-nn}. To show that (4) $\Rightarrow$ (3) 
note that $f\in \cS(n)_{2\al}$ if and only if $f=\sum_{j=1}^{r}g_j^2$ for some 
$r\in\bN$ and $g_j\in\bR_{\al}[x_1,\ldots,x_n]$, $1\le j\le r$. Given 
$\al\in\bN_0^n$ let $m=m(\al)$ be the cardinality of 
$\{\be\in\bN_0^n:\be\le \al\}$ (which is clearly finite) and order this set 
lexicographically as $\{\al^1,\ldots,\al^m\}$. Any 
$g\in\bR_{\al}[x_1,\ldots,x_n]$ may then be written as
$$
g(\bx)=\sum_{i=1}^{m}c_i\bx^{\al^i},\quad c_i\in\bR,\,1\le i\le m,
$$
so the desired conclusion follows from \eqref{eq-extra}. 
\end{proof}

In view of Theorem \ref{th-sn} and Corollary \ref{c-2} one may ask whether the 
following analog of Hilbert's 17th problem holds, cf.~\cite[Problem 17]{BBCV}. 

\begin{question}
Is it true that for any $p\in\cN(n)$ there exist $f\in\cS(n)$ and a linear 
partial
differential operator with constant coefficients $T\in\fL(n)$ such that 
$T(\cS(n))\subseteq \cN(n)$ and $T(f)=p$?
\end{question}


\begin{thebibliography}{99}

\bibitem{Ak}
N.~I.~Akhiezer, The Classical Moment Problem. 
Oliver \& Boyd, Edinburgh \& London, 1965. 

\bibitem{BCR}
C.~Berg, J.~P.~R.~Christensen, P.~Ressel, Harmonic analysis on semigroups. 
Graduate Texts in 
Mathematics, Vol.~100. Springer-Verlag, New York, 1984.

\bibitem{BB-LY}
J.~Borcea, P.~Br\"and\'en, {\em The Lee-Yang 
and P\'olya-Schur programs}, Parts I, II,  
preprints arXiv:0809.0401, arXiv:0809.3087.

\bibitem{BB-A}
J.~Borcea, P.~Br\"and\'en, {\em P\'olya-Schur master theorems for 
circular domains and their boundaries}, Ann. of Math., to appear; 
http://annals.math.princeton.edu/.

\bibitem{BB-D}
J.~Borcea, P.~Br\"and\'en, {\em Applications of stable 
polynomials to mixed determinants: Johnson's conjectures, unimodality, and 
symmetrized Fischer products}, Duke Math. J. {\bf 143} (2008), 205--223.

\bibitem{BB-P}
J.~Borcea, P.~Br\"and\'en, {\em Multivariate P\'olya-Schur classification 
problems in the Weyl algebra}, preprint arXiv:math/0606360.

\bibitem{BBCV}
J.~Borcea, P.~Br\"and\'en, G.~Csordas, V.~Vinnikov, {\em P\'olya-Schur-Lax 
problems: hyperbolicity and stability preservers}, 
http://www.aimath.org/pastworkshops/polyaschurlax.html.

\bibitem{BBL}
J.~Borcea, P.~Br\"and\'en, T.~M.~Liggett, {\em Negative dependence and the 
geometry of polynomials}, J. Amer. Math. Soc., to appear; 
http://www.ams.org/journals/jams/.

\bibitem{BGS}
J.~Borcea, A.~Guterman, B.~Shapiro, {\em Preserving positive polynomials and 
beyond}, preprint arXiv:0801.1749.

\bibitem{C}
T.~Carleman, {\em Sur le probl\`eme des moments}, 
C. R. Acad. Sci. Paris {\bf 174} (1922), 1680--1682.

\bibitem{CC} 
T.~Craven, G.~Csordas,
{\em Composition theorems, multiplier sequences and complex zero
decreasing sequences},  in  ``Value Distribution Theory
and Its Related Topics'', G. Barsegian, I. Laine, 
C. C. Yang (Eds.), pp.~131--166, Kluwer Press, 2004.

\bibitem{CC1}
T.~Craven, G.~Csordas, {\em Problems and theorems in the theory of multiplier 
sequences}, Serdica Math. J. {\bf 22} (1996), 515--524. 

\bibitem{CC2}
T.~Craven, G.~Csordas, {\em Complex zero decreasing sequences}, Methods 
Appl. Anal. {\bf 2} (1995), 420--441. 

\bibitem{Ham}
H.~Hamburger, {\"Uber eine Erweiterung des Stieltjesschen Momentenproblems}, 
Parts I, II, III, Math. Ann. {\bf 81} (1920), 235--319; ibid. {\bf 82} (1921), 
20--164, 168--187.

\bibitem{Hav}
E.~K.~Haviland, {\em On the momentum problem for distribution functions in 
more than one dimension}, Parts I, II, Amer. Math. J {\bf 57} (1935), 
562--568; ibid. {\bf 58} (1936), 164--168.

\bibitem{H-P}
J.~W.~Helton, M.~Putinar, {\em Positive polynomials in scalar and matrix 
variables, the spectral theorem, and optimization}, in ``Operator theory, 
structured matrices, and dilations'', pp.~229--306, Theta Ser. Adv. Math., 7, 
Theta, Bucharest, 2007.

\bibitem{H}
A.~Hurwitz, {\em \"Uber definite Polynome}, Math. Ann. {\bf 73} (1913), 
173--176.

\bibitem{Ka}
S.~Karlin, Total Positivity. Vol.~I, Stanford Univ. Press, Stanford, CA, 1968.

\bibitem{PS}
G.~P\'olya, I.~Schur,  {\em \"Uber zwei Arten von
Faktorenfolgen in der Theorie der algebraischen Gleichungen}, J. Reine
Angew. Math. {\bf 144} (1914), 89--113.

\bibitem{PSz}
G.~P\'olya, G.~Szeg\H{o}, Problems and Theorems in Analysis. Vol.~II. 
Reprint of the 1976 English translation. Classics in Mathematics. 
Springer-Verlag, Berlin, 1998. 

\bibitem{PD} 
A.~Prestel, C.~N.~Delzell, Positive Polynomials. From Hilbert's 17th Problem 
to Real Algebra. Springer Monographs in Mathematics, Springer-Verlag, 
Berlin, 2001.

\bibitem{Pu-S}
M.~Putinar, C.~Scheiderer, {\em Multivariate moment problems: geometry and 
indeterminateness},  Ann. Sc. Norm. Super. Pisa Cl. Sci. (5) {\bf 5}  
(2006), 137--157. 

\bibitem{Pu-Sm}
M.~Putinar, K.~Schm\"udgen, {\em Multivariate determinateness}, preprint 
arXiv:0810.0840.

\bibitem{Pu-V}
M.~Putinar, F.-H.~Vasilescu, 
{\em A uniqueness criterion in the multivariate moment problem}, 
Math. Scand. {\bf 92} (2003), 295--300. 

\bibitem{Re} 
R.~Remak, {\em Bemerkung zu Herrn Stridsbergs Beweis des Waringschen 
Theorems}, Math. Ann. {\bf 72} (1912), 153--156. 

\bibitem{R}
B.~Reznick, {\em Sums of even powers of real linear forms}.  
Mem. Amer. Math. Soc. {\bf 96} (1992), no. 463.

\bibitem{Ri}
M.~Riesz, {\em Sur le probl\`eme des moments. Troisi\`eme Note}, Arkiv 
f\"or matematik, astronomi och fysik {\bf 17} (1923), no. 16, 1--52.

\bibitem{ST}
J.~A.~Shohat, J.~D.~Tamarkin, The Problem of Moments. Amer. Math. Soc., 
Providence, R.I., 1943.

\bibitem{W} 
D.~V.~Widder, The Laplace Transform. Princeton Math. Series Vol.~6, 
Princeton Univ. Press, Princeton, NJ, 1941.

\end{thebibliography}
\end{document}